\documentclass[11pt,amssymb]{amsart}
\usepackage{amsmath,amssymb}
\usepackage[mathscr]{eucal}
\usepackage[all,cmtip]{xy}

\newcommand{\Z}{{\mathbb Z}}

\newcommand{\Tr}{\mathrm{Tr}}

\newcommand{\Ga}{\mathrm{Gal}}

\newtheorem{theorem}{Theorem}[section]
\newtheorem{proposition}[theorem]{Proposition}
\newtheorem{lemma}[theorem]{Lemma}

\hoffset=-2.5cm \textwidth=17cm \voffset=-1.8cm \textheight=24cm
.240pk scaled 1200 .240pk
\DeclareFontFamily{U}{wncy}{}
\DeclareFontShape{U}{wncy}{m}{n}{<->wncyr10}{}
\DeclareSymbolFont{mcy}{U}{wncy}{m}{n}
\DeclareMathSymbol{\Sha}{\mathord}{mcy}{"58}

\usepackage{hyperref}
\usepackage{xcolor}
\usepackage[shortlabels]{enumitem}
\usepackage{cancel}
\usepackage{tikz-cd}
\usepackage{comment}

\newcommand\extrafootertext[1]{%
    \bgroup
    \renewcommand\thefootnote{\fnsymbol{footnote}}%
    \renewcommand\thempfootnote{\fnsymbol{mpfootnote}}%
    \footnotetext[0]{#1}%
    \egroup
}

\newcommand{\inv}{^{-1}}
\newcommand{\noi}{\noindent}
\newcommand{\ov}{\overline}
\newcommand{\wt}{\widetilde}

\newcommand{\Csf}{\mathsf{C}}

\newcommand{\calH}{\mathcal{H}}
\newcommand{\G}{\mathbb{G}}
\newcommand{\sig}{\sigma}
\newcommand{\eps}{\epsilon}
\newcommand{\iso}{\cong}
\newcommand{\semi}{\rtimes}
\newcommand{\frakm}{\mathfrak{m}}

\newcommand{\lp}{\left(}
\newcommand{\rp}{\right)}
\newcommand{\lb}{\left\{}
\newcommand{\rb}{\right\}}
\newcommand{\into}{\hookrightarrow}
\DeclareMathOperator{\SU}{SU}
\DeclareMathOperator{\M}{M}

\DeclareMathOperator{\GL}{GL}

\DeclareMathOperator{\Id}{Id}
\DeclareMathOperator{\characteristic}{char}

\DeclareMathOperator{\Gal}{Gal}

\theoremstyle{definition}
\newtheorem{definition}[theorem]{Definition}
\newtheorem{remark}[theorem]{Remark}

\renewcommand{\tilde}{\widetilde}

\begin{document}

\title[Unitary groups]{On abstract homomorphisms of some special unitary groups}

\author[I.~Rapinchuk]{Igor A. Rapinchuk}
\author[J.~Ruiter]{Joshua Ruiter}

\address{Department of Mathematics, Michigan State University, East Lansing, MI 48824, USA}

\email{rapinchu@msu.edu}

\address{Department of Mathematics, Michigan State University, East Lansing, MI
48824, USA}

\email{ruiterj2@msu.edu}

\maketitle

\begin{abstract}
We analyze the abstract representations of the groups of rational points of even-dimensional quasi-split special unitary groups associated with quadratic field extensions. We show that, under certain assumptions, such representations have a standard description, as predicted by a conjecture of Borel and Tits \cite{BT}. Our method extends the approach introduced by the first author in \cite{IR} to study abstract representations of Chevalley groups and is based on the construction and analysis of a certain algebraic ring associated to a given abstract representation.
\end{abstract}

\section{Introduction}\label{S-Introduction}

\extrafootertext{
2010 \textit{Mathematics subject classification}: primary 20G15, secondary 20G35.

\textit{Key words and phrases.} Abstract homomorphism, special unitary group.
}

The goal of this paper is to analyze the abstract representations of the groups of rational points of even-dimensional quasi-split special unitary groups associated with quadratic field extensions. Our main result, whose precise formulation is given in Theorem \ref{T-MainTheorem} below, is that, under certain assumptions, such representations have the expected description, as predicted by the following longstanding conjecture of Borel and Tits (\cite[8.19]{BT}).

As usual, for
an algebraic group $G$ defined over a field $k$, we denote by $G^+$ the (normal) subgroup of $G(k)$ generated by the
the $k$-points of the unipotent radicals of the $k$-defined parabolic subgroups of $G$. With these notations, Borel and Tits conjectured the following.
\vskip3mm

\noindent (BT) \ \ \parbox{14.5cm}{Let $G$ and $G'$ be algebraic groups defined over infinite fields $k$ and $k'$, respectively. If $\rho \colon G(k) \to G'(k')$ is any abstract homomorphism such that $\rho(G^+)$ is Zariski-dense in $G'(k'),$ then {there exists a commutative finite-dimensional $k'$-algebra $A$ and a ring homomorphism $f_A : k \to A$ such that  $\rho = \sigma \circ r_{A/k'} \circ F$, where $F : G(k) \to G_A(A)$ is induced by $f_A$ ($G_A$ is the group obtained by change of scalars), $r_{A/k'} : G_A(A) \to R_{A/k'}(G_A)(k')$ is the canonical isomorphism  (here $R_{A/k'}$ denotes the functor of restriction of scalars), and $\sigma$ is a rational $k'$-morphism of $R_{A/k'} (G_A)$ to $G'.$}}

\vskip3mm

If an abstract homomorphism $\rho : G(k) \to G'(k')$ admits a factorization as in (BT), we will say that $\rho$
has a {\it standard description}.

\begin{remark}
It has been pointed out by B.~Conrad and G.~Prasad that by using the results of \cite[Chapter 9]{CGP1} (particularly Proposition 9.9.2), 
one can construct counterexamples to (BT) over all local and global function fields of characteristic 2 (or, more generally, over any field $k$ of characteristic 2 such that $[k:k^2] = 2$). The groups that arise in these counterexamples are perfect and $k$-simple. So, one should exclude fields of characteristic 2 in
the statement of (BT).
\end{remark}

In \cite{IR}, the first author introduced a method, based on the construction and analysis of certain algebraic rings, for studying abstract representations of the elementary subgroups of simply-connected Chevalley groups over commutative rings. Using these techniques, he obtained a general result on abstract representations that, in particular, yielded (BT)
in the case where $k$ is a field of characteristic $\neq 2$ or 3, $k' = K$ is an algebraically closed field of characteristic 0, and $G$ is a split simply-connected $k$-group. Subsequently, this approach was extended in \cite{IR1} to confirm (BT) for abstract representations of groups of the form $\mathbf{SL}_{n, D}$, where $D$ is a finite-dimensional central division algebra over a field of characteristic zero. In positive characteristic, results on this problem were obtained by Seitz in \cite{Seitz}, which were subsequently revisited by Boyarchenko and the first author in \cite{BR} from the point of view of algebraic rings. Furthermore, the first author recently employed the methods of \cite{IR}, together
with some considerations on central extensions, to obtain the first unconditional rigidity statements for finitely generated linear groups other than arithmetic groups/lattices (such as $\mathrm{SL}_n(\Z[x])$, for $n \geq 3$) --- cf. \cite{IR-Curves}. We refer the reader to \cite{IR3} for a more extensive overview of work on (BT) and its connections to various classical forms of rigidity. 


In the present paper, we consider the following situation. Let $k$ be a field of characteristic 0 and let $L = k(\sqrt{d})$ be a quadratic extension. We set $\tau : L \to L$ to be the nontrivial element of $\Gal(L/k)$ and note that for any commutative $k$-algebra $R$, the action of $\tau$ naturally extends to $R_L : = R \otimes_k L$ via the second factor. For ease of notation, we will write $\tau(x) = \ov x$ for $x \in R_L.$ Next, fix an integer $n \geq 2$ and let $V = L^{2n}$ be a $2n$-dimensional $L$-vector space equipped with a (skew-)hermitian form $h \colon V \times V \to L$. We will assume that $h$ has maximal Witt index, so that, with respect to a suitable basis
of $V$, the matrix of $h$ is
\[
H =
	\begin{pmatrix}
		0 & -1   \\
		1 & 0  \\
		& & \ddots   \\
		& & & 0 & -1 \\
		& & & 1 & 0
	\end{pmatrix}
\]
Let $G = \mathrm{SU}_{2n}(L,h)$ be the corresponding special unitary group. Explicitly, for a commutative $k$-algebra $R$, we have
\[
	G(R) = \{X \in \mathrm{SL}_{2n}(R_L) \mid X^* H X = H \},
\]
where for $X = (a_{ij})$, we let $X^* = (\ov a_{ji})$ denote the conjugate transpose matrix. It is well-known that $G$ is a quasi-split simply-connected $k$-group (in fact, a $k$-form of $\mathrm{SL}_{2n}$) whose relative root system $\Phi_k$ is of type $\mathsf{C}_n$ (see, for example, \cite[Ch. V, \S23.8]{BorelAG} or \cite[Ch. 2, \S2.3.3]{Pl-R}).  With these notations, our main result is as follows.

\begin{theorem}
\label{T-MainTheorem}
Let $L = k(\sqrt{d})$ be a quadratic extension of a field $k$ of characteristic 0, and for $n \geq 2$, set $G = \mathrm{SU}_{2n}(L, h)$ to be the special unitary group of a (skew-)hermitian form $h : L^{2n} \times L^{2n} \to L$ of maximal Witt index. Let $K$ be an algebraically closed field of characteristic 0 and consider an abstract representation
\[
	\rho : G(k) \to \mathrm{GL}_m(K).
\]
Set $H = \overline{\rho(G(k))}$ to be the Zariski closure of the image of $\rho$ (we note that $H$ is automatically connected --- see Lemma \ref{H is connected}). Then if the unipotent radical $U = R_u(H)$ of $H$ is commutative, there exists a commutative finite-dimensional $K$-algebra $A$, a ring homomorphism $f : k \to A$ with Zariski-dense image, and a morphism of algebraic $K$-groups $\sigma : G(A) \to H$ such that $\rho = \sigma \circ F$, where $F : G(k) \to G(A)$ is the group homomorphism induced by $f$.
\end{theorem}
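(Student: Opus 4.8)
The plan is to transport the algebraic-ring method of \cite{IR} to the quasi-split special unitary setting: from the abstract representation $\rho$ one manufactures a finite-dimensional commutative $K$-algebra $A$ together with a ring homomorphism $f \colon k \to A$ with Zariski-dense image through which $\rho$ factors. Fix a maximal $k$-split torus $S \subset G$, let $\Phi_k$ be the relative root system (of type $\mathsf{C}_n$, hence of rank $n \geq 2$), and for $\alpha \in \Phi_k$ let $U_\alpha$ denote the corresponding root subgroup, so that $G(k)$ is generated by the $U_\alpha(k)$ (recall $G$ is simply-connected and quasi-split). Here the long root subgroups ($\alpha = 2e_i$) are one-dimensional over $k$, the short ones ($\alpha = e_i \pm e_j$, $i \neq j$) are identified with $(L, +)$, and together the $U_\alpha(k)$ obey the relations describing the conjugation action of $S$ and the commutator (``Steinberg'') relations of $G$, which in the unitary case are governed by the form $h$ and the norm $N_{L/k}$. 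Since $n \geq 2$, there exist non-proportional pairs of relative roots, so these commutator relations are non-trivial --- this is precisely the point where the hypothesis $n \geq 2$ enters.

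\emph{Step 1: construction of the algebraic ring.} Standard arguments in the style of \cite{IR} --- using divisibility of the $U_\alpha(k)$, the relations for the $S$-action, and Jordan decomposition in $H$ --- first show that $\rho$ carries root unipotents to unipotent elements of $H$ and that, after passing to logarithms, each map $t \mapsto \log \rho(x_\alpha(t))$ is \emph{additive} into a nilpotent subspace of $\mathrm{Lie}(H)$. The failure of these maps to be $K$-linear, and the way they are intertwined by the $S$-action and by the commutator relations among the $U_\alpha(k)$, is then assembled --- following \cite{IR} --- into a commutative \emph{algebraic} ring $A$ (an affine $K$-variety equipped with morphisms realizing addition and multiplication) together with a ring homomorphism $f \colon k \to A$ with Zariski-dense image; reduction modulo the nilradical yields $A \twoheadrightarrow A/\mathrm{nil}(A) \cong K^{\oplus r}$, and the composite $k \xrightarrow{f} A \twoheadrightarrow A/\mathrm{nil}(A)$ records the ``semisimple part'' of $\rho$ as a tuple of field embeddings $k \hookrightarrow K$. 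The hypothesis that $U = R_u(H)$ is commutative is indispensable here: it forces $A$ to be commutative and, more importantly, \emph{finite-dimensional} over $K$, being what makes the bilinear maps extracted from the $U$-valued parts of the $\rho(x_\alpha(t))$ close up into a ring of finite dimension.

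\emph{Step 2: the factorization.} With $A$ and $f$ in hand, define $\sigma$ on the root subgroups of $G(A)$ by sending $x_\alpha(a)$ --- with $a \in A$ for long $\alpha$ and $a \in A \otimes_k L$ for short $\alpha$ --- to the unipotent element of $H$ prescribed by the identifications of Step 1. One verifies that these assignments are compatible with the $S$- and commutator relations of $G(A)$, so that they extend to a homomorphism $\sigma \colon G(A) \to H$; the verification uses commutativity of $A$ and the fact that the structure constants of these relations over $A$ are obtained from those over $k$ by applying $f$ (respectively $f \otimes \id_L$). Because $A$ is an algebraic ring, $\sigma$ is automatically a morphism of algebraic $K$-groups; by construction $\sigma \circ F$ and $\rho$ agree on the generators $x_\alpha(t)$, $t \in k$, whence $\rho = \sigma \circ F$, and density of $f(k)$ in $A$ gives $\overline{\sigma(G(A))} = H$.

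The main obstacle is Step 1: carrying out the algebraic-ring construction for $G = \mathrm{SU}_{2n}(L, h)$ rather than for a split Chevalley group. This requires (i) a workable presentation of $G(k)$ by its relative root subgroups, with careful bookkeeping of the $\tau$-action and of the more intricate commutator relations that mix the ``split'' long root groups (parametrized by $k$) with the ``unitary'' short root groups (parametrized by $L$); (ii) checking that the various bilinear maps harvested from $\rho$ are mutually consistent and assemble into a single commutative algebraic ring; and (iii) proving finite-dimensionality of $A$, which is exactly where commutativity of $R_u(H)$ is used. Once the ring is built, Step 2 is a formal --- if lengthy --- verification in the spirit of \cite{IR, IR1}.
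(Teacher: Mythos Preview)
Your outline has the right overall shape, but it misplaces the role of the hypothesis on $R_u(H)$ and, more seriously, skips a genuine obstruction in Step~2.

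First, the commutativity of $U = R_u(H)$ is \emph{not} what makes $A$ commutative or finite-dimensional. The algebraic ring $A$ is constructed (in the paper, by restricting $\rho$ to the split subgroup $G_0(k) = \mathrm{Sp}_{2n}(k)$ and invoking \cite[Theorem~3.1]{IR}) purely from the Steinberg relations, and it is automatically commutative; finite-dimensionality over $K$ follows from $\mathrm{char}\,k = \mathrm{char}\,K = 0$ alone via \cite[Lemma~2.13, Proposition~2.14]{IR}. The commutativity of $U$ enters only at the very end, to show $Z(H) \cap U = \{e\}$ and thereby lift a homomorphism $\bar\sigma \colon G(A) \to H/Z(H)$ back to $H$.

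Second, and this is the real gap: in Step~2 you assert that checking the $S$-action and commutator relations for your assignments $x_\alpha(a) \mapsto \psi_\alpha(a)$ suffices to produce a homomorphism $\sigma \colon G(A) \to H$. But those relations are precisely the defining relations of the \emph{Steinberg group} $\wt G(A)$, not of $G(A) = E(A)$. What you actually obtain is a homomorphism $\wt\sigma \colon \wt G(A) \to H$, and you must then show it factors through the canonical surjection $\pi_A \colon \wt G(A) \to G(A)$. The paper does this in two stages: one proves $\ker\pi_A$ is central in $\wt G(A)$ (extending a result of Stavrova to finite products of local $k$-algebras), so that $\wt\sigma(\ker\pi_A) \subset Z(H)$ and $\wt\sigma$ descends to a map $\bar\sigma \colon G(A) \to H/Z(H)$; one then checks $\bar\sigma$ is a morphism of algebraic groups; and finally one uses the commutativity of $U$ (via $Z(H) \cap U = \{e\}$ and a Levi-decomposition argument) to lift $\bar\sigma$ to the desired $\sigma \colon G(A) \to H$. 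Without this centrality-and-lifting argument, your Step~2 does not go through: there is no reason a priori for the additional relations in $G(A)$ to be respected by your $\psi_\alpha$.
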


\noindent (In this statement, we view $G(A)$ as an algebraic $K$-group using the functor of restriction of scalars --- see \S\ref{S-Rationality} for further details.)

\vskip2mm

The proof of this result proceeds along the following lines. First, since $G$ is a simply-connected $k$-group whose relative root system $\Phi_k$ is of type $\mathsf{C}_n$, it follows that $G$ contains a $k$-split simply-connected $k$-group $G_0 = \mathrm{Sp}_{2n}$ of type $\mathsf{C}_n$ (see \cite[Th\'eor\`eme 7.2]{BT1} or \cite[Theorem C.2.30]{CGP1}). We consider the restriction of $\rho$ to $G_0(k)$, and use the construction given in \cite{IR} to associate to $\rho|_{G_0(k)}$ an algebraic ring $A$, together with a ring homomorphism $f : k \to A$ with Zariski-dense image.
Since $k$ and $K$ are both fields of characteristic 0, $A$ is in fact a finite-dimensional $K$-algebra (by \cite[Lemma 2.13(ii), Proposition 2.14]{IR}). As shown in \cite{IR}, the algebraic ring $A$ plays a central role in proving that $\rho|_{G_0(k)}$ has a standard description; it turns out that $A$ also suffices for the analysis of
$\rho$. More precisely, following the general strategy of \cite{IR} and \cite{IR1}, we first show that $\rho$ lifts to a representation $\wt{\sigma}: \wt G(A) \to \mathrm{GL}_m(K)$, where $\wt G(A)$ is the generalized Steinberg group introduced by Stavrova \cite{Stavrova} (which builds on an earlier construction due to Deodhar \cite{Deodhar}). Then, using the fact that the kernel of the canonical map $\wt G(A) \to G(A)$ is central (which extends a result of Stavrova to the present situation), together with our assumption that the unipotent radical $R_u(H)$ is commutative, we establish the existence of the required algebraic representation $\sigma: G(A) \to \mathrm{GL}_m(K).$

The structure of the paper is as follows. We begin by recalling in \S\ref{S-Elementary} several key definitions and statements pertaining to elementary subgroups of isotropic reductive group schemes (defined by Petrov and Stavrova \cite{PetrovStavrova}) that are needed for our purposes. We also discuss some relevant aspects of Stavrova's generalization in \cite{Stavrova} of the classical theory of Steinberg groups and central extensions. Next, in \S\ref{S-AlgebraicRing}, we introduce the algebraic ring $A$ and, after establishing some preliminary statements, show that $\rho$ lifts to a representation $\wt{\sigma}$ of $\wt{G}(A).$ Finally, in \S\ref{S-Rationality}, we follow the general strategy of \cite{IR} to prove that $\wt{\sigma}$ descends to a rational representation $\sigma$ of $G(A).$

\vskip2mm

\noindent {\sc Notations and conventions:} All rings will be assumed to be unital and commutative. Unless stated otherwise, we will denote by $G$ the special unitary group $\mathrm{SU}_{2n}(L,h)$ introduced above.

\section{Elementary subgroups and Steinberg groups}\label{S-Elementary}

In this section, we recall several aspects of the theory of elementary subgroups of isotropic reductive group schemes, developed by Petrov and Stavrova \cite{PetrovStavrova}, as well as relevant points of Stavrova's \cite{Stavrova} generalization of the classical Steinberg group. In view of our later applications, we will focus on the case of $\SU_{2n}(L,h)$ ($n \ge 2$).


\subsection{Elementary subgroups}
Suppose $\mathcal{G}$ is a reductive group scheme over a ring $R$ that is isotropic of rank $\geq 1$ (i.e. every semisimple normal $R$-subgroup of $\mathcal{G}$ contains a 1-dimensional split $R$-torus). Then $\mathcal{G}$ contains a pair of opposite parabolic $R$-subgroups $P$ and $P^-$ that intersect properly every semisimple normal $R$-subgroup of $\mathcal{G}$ (such subgroups are sometimes referred to as \textit{strictly parabolic}). In \cite{PetrovStavrova}, the corresponding elementary subgroup $E_P(R)$ is then defined as the subgroup of $\mathcal{G}(R)$ generated by $U_{P}(R)$ and $U_{P-}(R)$, where $U_{P}$ and $U_{P^-}$ are the unipotent radicals of $P$ and $P^-$, respectively (we note that when $R = k$ is a field, then $E_P(k)$ coincides with the group $\mathcal{G}^+$ appearing in the statement of (BT)). The main result of \cite{PetrovStavrova} (see also \cite[Theorem 2.4]{StavrovaK1}) is that if for any maximal ideal $\frakm \subset R$, the group $\mathcal{G}_{R_\frakm}$ is isotropic of rank $\ge 2$, then $E_P(R)$ does not depend on the choice of a strictly parabolic subgroup $P$. This assumption is automatically satisfied in all situations considered in this paper, so, to simplify notations, we will denote the elementary subgroup simply by $E(R)$.

Furthermore, in analogy with elementary subgroups of Chevalley groups, Petrov and Stavrova provide a description of $E(R)$ in terms of generators that satisfy certain generalized Chevalley commutator relations. The following statement collects the relevant parts of \cite[Theorem 2]{PetrovStavrova} and \cite[Lemma 2.14]{Stavrova} in the case of $G = \SU_{2n}(L,h)$ that will be needed for our analysis.

\begin{theorem}
\label{root subgroup maps}
Let $G = \SU_{2n}(L,h)$, fix a maximal $k$-split torus $S \subset G$, and denote by $\Phi_k$ the corresponding relative root system (of type $\Csf_n$), viewed as a subset of the character group $X^*(S)$. Then for every $\alpha \in \Phi_k$, there exists a vector $k$-group scheme $V_\alpha$ and a closed embedding of schemes
\[
	X_\alpha:V_\alpha \to G
\]
such that for any $k$-algebra $R$, we have the following:
\begin{enumerate}
	\item For any $v,w \in V_\alpha(R)$,
	\[
		X_\alpha(v) \cdot  X_\alpha(w) = X_{\alpha}(v+w)
	\]
	In particular, $X_\alpha(0) = 1$.
	\item For any $s \in S(R)$ and $v \in V_\alpha(R)$,
	\[
		s \cdot X_\alpha(v) \cdot s \inv = X_\alpha \Big( \alpha(s) v \Big)
	\]
	\item (Chevalley commutator formula) For any $\alpha,\beta \in \Phi_k$ such that $\alpha \neq \pm \beta$, and for all $u \in V_\alpha(R), v \in V_\beta(R)$,
	\[
		\Big[ X_\alpha(u), X_\beta(v) \Big] = \prod_{\substack{{i,j \ge 1} \\ i\alpha+j\beta \in \Phi_k}} X_{i\alpha+j\beta} \Big( N_{ij}^{\alpha \beta}(u,v) \Big)
	\]
	for some polynomial maps $N_{ij}^{\alpha \beta}:V_\alpha(R) \times V_\beta(R) \to V_{i\alpha+j\beta}(R)$. The map $N_{ij}^{\alpha \beta}$ is homogeneous of degree $i$ in the first variable and homogeneous of degree $j$ in the second variable.
	\item The elementary subgroup $E(R)$ is generated by the elements $X_\alpha(v)$ for all $\alpha \in \Phi_k$ and all $v \in V_\alpha(R)$.
\end{enumerate}
\end{theorem}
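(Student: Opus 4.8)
The statement packages, in coordinates adapted to $G = \SU_{2n}(L,h)$, known results on elementary subgroups of isotropic reductive group schemes: specifically \cite[Theorem 2]{PetrovStavrova} for items (1), (2), (4), and \cite[Lemma 2.14]{Stavrova} for the commutator formula (3). The plan is to check that $G$ lies in the scope of those results, transcribe their conclusions into the present notation, and make the maps $X_\alpha$ explicit, the last being convenient for the later arguments.

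Recall (as noted in the text) that $G$ is a quasi-split, simply connected $k$-group, a $k$-form of $\SL_{2n}$, with relative root system $\Phi_k$ of type $\Csf_n$ and $k$-rank $n$; in particular $G$ is $k$-isotropic, and it is absolutely almost simple since $G_{\ov k} \cong \SL_{2n}$. Hence for any $k$-algebra $R$ and any maximal ideal $\frakm \subset R$, the chosen maximal $k$-split torus $S$ stays split over $R_\frakm$ and $G_{R_\frakm}$ is isotropic of rank $\geq n \geq 2$ --- exactly the condition under which \cite{PetrovStavrova} shows that $E_P(R)$ is independent of the choice of strictly parabolic $P$, which justifies the notation $E(R)$. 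By \cite[Theorem 2]{PetrovStavrova}, each relative root $\alpha \in \Phi_k$ --- here $\Phi_k$ of type $\Csf_n$, in the standard realization $\Phi_k = \{\pm e_i \pm e_j : i \neq j\} \cup \{\pm 2e_i\}$ --- gives rise to a closed subscheme $V_\alpha \subset G$, a vector $k$-group on which $S$ acts through the character $\alpha$, and to a closed embedding $X_\alpha \colon V_\alpha \to G$ satisfying the additivity relation (1) (in particular $X_\alpha(0) = 1$), the $S$-equivariance (2), and the generation statement (4) (the $X_\alpha(v)$ generate $U_P(R)$ and $U_{P^-}(R)$ for the standard pair of opposite strictly parabolics, hence $E(R)$). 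Working in the hyperbolic basis of $V = L^{2n}$ in which $h$ has matrix $H$, one identifies $V_\alpha \cong R_{L/k}(\mathbb{G}_a)$ (a $2$-dimensional $k$-group) for the short roots $\alpha = \pm e_i \pm e_j$ and $V_\alpha$ a $1$-dimensional vector $k$-group for the long roots $\alpha = \pm 2e_i$, with $X_\alpha$ given by explicit elementary-type matrices; for instance, in $\SU_4$ with the displayed $H$ one has $X_{e_1 - e_2}(x) = I + x E_{13} - \ov x E_{42}$ for $x \in R_L$ and $X_{2e_1}(x) = I + x E_{12}$ for $x \in R$, from which (1) and (2) can also be checked by hand.

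The substantive point is the Chevalley commutator formula (3) together with the bidegree $(i,j)$ homogeneity of the maps $N_{ij}^{\alpha\beta}$, which I would take from \cite[Lemma 2.14]{Stavrova}. The existence of polynomial maps $N_{ij}^{\alpha\beta}$ with $[X_\alpha(u), X_\beta(v)] = \prod_{i,j \geq 1,\, i\alpha + j\beta \in \Phi_k} X_{i\alpha+j\beta}(N_{ij}^{\alpha\beta}(u,v))$ reflects the structure of the unipotent radicals, namely that $[X_\alpha(u), X_\beta(v)]$ lies in the subgroup generated by the $X_\gamma(V_\gamma(R))$ for $\gamma$ of the form $i\alpha + j\beta$ with $i, j \geq 1$; the homogeneity then follows by conjugating this identity by $s \in S(R)$ and using (2), which gives $N_{ij}^{\alpha\beta}(\alpha(s)u,\, \beta(s)v) = (i\alpha + j\beta)(s)\, N_{ij}^{\alpha\beta}(u,v)$, so that, since $\alpha$ and $\beta$ are linearly independent in $X^*(S) \otimes \Q$ when $\alpha \neq \pm\beta$, only monomials of degree $i$ in the coordinates of $u$ and degree $j$ in those of $v$ can occur. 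The part that genuinely requires care --- and the reason for invoking \cite{Stavrova} rather than arguing ad hoc --- is that in type $\Csf_n$ the commutators producing long-root terms, of the shapes $[X_{e_i - e_j}(\cdot), X_{e_i + e_j}(\cdot)]$ and $[X_{e_i - e_j}(\cdot), X_{2e_j}(\cdot)]$, have structure constants built out of the conjugation $\tau$ and the form $h$ (expressions such as $u\ov v + v\ov u$ or $u\ov u\, v$), and one must check that these are valued in the correct $1$-dimensional subgroups $V_{2e_i}$ and have the asserted bidegrees. Since there are only finitely many relevant pairs $(\alpha,\beta)$ up to the action of the relative Weyl group, an alternative is to verify (3) directly by matrix multiplication in $\SL_{2n}(R_L)$ --- elementary, but somewhat lengthy.
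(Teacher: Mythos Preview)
Your proposal is correct and matches the paper's approach: the paper does not prove this theorem but simply states it as a specialization of \cite[Theorem~2]{PetrovStavrova} and \cite[Lemma~2.14]{Stavrova} to $G = \SU_{2n}(L,h)$, which is exactly what you do. Your additional remarks on verifying the hypotheses, the explicit matrix description of the $X_\alpha$, and the homogeneity argument via $S$-conjugation are helpful elaborations but not required beyond the citations.
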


\begin{remark}
\label{G=E}
\ \

\vskip1mm

\noindent (a) \parbox[t]{16cm}{In view of the structure of the root system $\mathsf{C}_n$, it is easy to see that the products on the right hand  side of (3) contain at most two terms with the possible values of $i$ and $j$ lying in $\{1, 2 \}.$ Moreover, in the cases where there are two terms, these commute with each other.}

\vskip1mm

\noindent (b) \parbox[t]{16cm}{It follows from the definitions that the construction of the elementary subgroup is functorial in $R$ (i.e. a ring homomorphism $R_1 \to R_2$ gives rise to a group homomorphism $E(R_1) \to E(R_2)$), and that it is compatible with finite products (i.e. $E(R_1 \times \cdots \times R_n) = E(R_1) \times \cdots \times E(R_n)$). Furthermore, since $G = \SU_{2n}(L,h)$ is a quasi-split simply-connected $k$-group, this observation and \cite[Lemma 5.2]{Stavrova} imply that $G(R) = E(R)$ for any ring $R$ that is a finite product of local $k$-algebras.}
\end{remark}

Continuing with the notations of the theorem, the dimension of $V_\alpha$ is simply the dimension of the relative root space associated to $\alpha$. Thus, in our situation, there are two possibilities: if $\alpha$ is a long root, then $V_\alpha = \G_a$, so that $V_\alpha(R) = R$ for any $k$-algebra $R$; on the other hand, if $\alpha$ is short, then $V_\alpha \iso (\G_a)^2$, and we have $V_\alpha(R) = R \otimes_k L = R_L$ (to make the identification $(\G_a)^2(R) = R^2 \iso R_L$, we use the fact that $k^2 \iso L$ as a $k$-vector space).


For the calculations that we will carry out in subsequent sections, it will be useful to make the statement of Theorem \ref{root subgroup maps} more explicit, as follows. Fix an algebraic closure $\bar{k}$ of $k$ and consider the maximal $k$-split torus $S$ of $G$ (we will identify $G$ with $G(\bar{k})$ for the sake of concreteness) consisting of diagonal matrices of the form
\[
\begin{pmatrix}
		t_1 &    \\
		 & t_1^{-1}  \\
		& & \ddots   \\
		& & & t_n &  \\
		& & &  & t_n^{-1}
	\end{pmatrix}
\]
with $t_i \in \bar{k}.$ For $1 \leq i \leq 2n$, let $\alpha_i : S \to \G_m$ be the character that maps such a matrix to its $i$-th diagonal entry. Then one easily checks that
\[
	\Phi_k = \lb \pm 2 \alpha_i : i \text{ odd} \rb \cup \lb \pm \alpha_i \pm \alpha_j : i \neq j, \text{ both odd} \rb
\]
with $1 \le i,j \le 2n$.
Furthermore, the morphisms $X_{\alpha}$ look as follows. For a ring $R$, we denote by $E_{ij}(x) \in \M_{2n}(R)$ the matrix with $x$ in the $ij$-th entry and 0 in all other entries. Also, if $R$ is a $k$-algebra, then, as we observed previously, the action of the nontrivial element $\tau \in \Ga(L/k)$ extends to $R_L = R \otimes_k L$, and we will write $\tau(a) = \ov a$ for $a \in R_L.$ Then the root group morphisms for the long roots are
\begin{align*}
	X_{2\alpha_i}(R)&:R \to G(R) \qquad x \mapsto 1 + E_{i,i+1}(x) \\
	X_{-2\alpha_i}(R)&:R \to G(R) \qquad x \mapsto X_{2\alpha_i}(x)^t =  1 + E_{i+1,i}(x),
\end{align*}
and for the short roots, the morphisms are
\begin{align*}
	X_{\alpha_i - \alpha_j}(R)&:R_L \to G(R) \qquad x \mapsto 1 + E_{ij}(x) - E_{j+1,i+1}(\ov x) \\
	X_{-\alpha_i + \alpha_j}(R)&:R_L \to G(R) \qquad x \mapsto X_{\alpha_i - \alpha_j}(x)^t = 1 + E_{ji}(x) - E_{i+1,j+1}(\ov x), \\
X_{\alpha_i + \alpha_j}(R)&:R_L \to G(R) \qquad x \mapsto 1 + E_{i',j'+1}(x) + E_{j',i'+1}(\ov x), \\
	X_{-\alpha_i - \alpha_j}(R)&:R_L \to G(R) \qquad x \mapsto X_{\alpha_i + \alpha_j}(x)^t = 1 + E_{j'+1,i'}(x) + E_{i'+1,j'}(\ov x),
\end{align*}
where for a pair $i, j$, we set $i' = \min(i,j)$ and $j' = \max(i,j)$.

By direct calculation, one obtains explicit formulas for the polynomial maps $N_{ij}^{\alpha \beta}$ appearing in Theorem \ref{root subgroup maps}. To formulate the result, given a $k$-algebra $R$, we let
\begin{align*}
	\Tr: R_L \to R \qquad a \mapsto a + \bar{a}
\end{align*}
denote the extension of the usual trace map $\Tr_{L/k} : L \to k$ to $R_L.$ Also, for $\delta = \pm 1$ and $v \in R_L$, we set
\[
	v_\delta =
	\begin{cases}
		v & \delta = 1 \\
		\ov v & \delta = -1
	\end{cases}
\]
The following lemma describes the maps $N_{ij}^{\alpha \beta}$ arising in all nontrivial Chevalley commutator relations for $\SU_{2n}(L,h)$.


\begin{lemma}
\label{N is surjective}
Let $\alpha, \beta \in \Phi_k$ be relative roots such that $\alpha+\beta \in \Phi_k$ and let $R$ be a $k$-algebra.
\begin{enumerate}
	\item Suppose $\alpha$ and $\beta$ are short and $\alpha+\beta$ is short. Then (relabelling if necessary) we can write $\alpha = \alpha_i - \alpha_j$ and $\beta = \alpha_j - \alpha_{\ell}$ for three distinct indices $i,j, \ell$, and
	\[
		N_{11}^{\alpha \beta}(u,v) = uv \qquad N_{11}^{\beta \alpha}(v,u) = -uv
	\]
	\item Suppose $\alpha$ and $\beta$ are both short and $\alpha+\beta$ is long. Then (relabelling if necessary) we can write $\alpha = \eps(\alpha_i - \alpha_j), \beta = \omega (\alpha_i + \alpha_j)$ for some $\eps = \pm 1, \omega = \pm 1$, with $i < j$, and
\begin{align*}
	N_{11}^{\alpha\beta}(u,v) = \omega  \Tr( u_{-\eps \omega} v ) \qquad N_{11}^{\beta \alpha}(v,u) = -\omega  \Tr( u_{-\eps \omega} v )
\end{align*}	
	for all $u \in V_\alpha(R), v \in V_\beta(R)$.
	\item Suppose $\alpha$ is short and $\beta$ long. Then we can write $\alpha = \eps \alpha_i + \omega \alpha_j$ and $\beta = - \eps 2 \alpha_i$ for some $\eps = \pm 1, \omega = \pm 1$ and $i \neq j$, and
\begin{align*}
	N_{11}^{\alpha\beta}(u,v) &= \omega u_{-c_{ij}} v \qquad \hspace{2pt} N_{11}^{\beta \alpha}(v,u) = - \omega u_{-c_{ij}} v \\
	N_{21}^{\alpha\beta}(u,v) &= -\eps \omega v u \ov u \qquad N_{12}^{\beta \alpha}(v,u) = \eps \omega vu \ov u
\end{align*}
	for all $u \in V_\alpha(R), v \in V_\beta(R)$, where
\[
c_{ij} =
	\begin{cases}
		1 & i < j \\
		-1 & i > j
	\end{cases}
\]
\end{enumerate}
In particular, whenever it is defined, the map $N_{11}^{\alpha \beta}$ is surjective.
\end{lemma}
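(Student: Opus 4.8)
The plan is to read off each map $N_{ij}^{\alpha\beta}$ by computing the group commutator $[X_\alpha(u),X_\beta(v)]$ directly from the explicit matrix formulas for the root group morphisms recorded above, and comparing with the right-hand side of the Chevalley commutator formula in Theorem~\ref{root subgroup maps}(3). Two preliminary remarks make this both legitimate and manageable. First, by Theorem~\ref{root subgroup maps}(1) we have $X_\gamma(w)\inv = X_\gamma(-w)$, and a direct check shows that each of the matrices $X_\gamma(w) = 1+N$ appearing above satisfies $N^2 = 0$, so in fact $X_\gamma(w)\inv = 1-N$; thus every commutator is an honest product $(1+P)(1+Q)(1-P)(1-Q)$ of four explicit unipotent matrices. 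Second, by Remark~\ref{G=E}(a) the product on the right-hand side of (3) has at most two factors, with $(i,j)$ ranging over $\{(1,1)\}$ or over $\{(1,1),(2,1)\}$, and when two factors occur they commute; since each $X_\gamma$ is injective, the commutator therefore determines the $N_{ij}^{\alpha\beta}$ unambiguously, with no ordering issue. Finally, inspecting which sums of roots of $\Csf_n$ are again roots shows that, after replacing $(\alpha,\beta)$ by a suitable pair in its orbit under permutations of the $\alpha_i$ and sign changes — the ``relabelling'' in the statement — every pair with $\alpha+\beta\in\Phi_k$ lies in exactly one of the three listed cases (there is no ``long $+$ long'' case, since $\pm 2\alpha_i\pm 2\alpha_j\notin\Phi_k$).

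Expanding $(1+P)(1+Q)(1-P)(1-Q)$ and using $P^2=Q^2=0$ gives $[X_\alpha(u),X_\beta(v)] = 1 + (PQ-QP) + (QPQ-PQP) + PQPQ$, so the ``linearized'' part $PQ-QP$ carries the leading information, and one checks in each case which higher products survive. In Case (1), with $\alpha = \alpha_i-\alpha_j$, $\beta = \alpha_j-\alpha_\ell$ for distinct $i,j,\ell$, the bracket contributes only through $E_{ij}(u)E_{j\ell}(v) = E_{i\ell}(uv)$ and its conjugate-transpose mate, all cubic and quartic terms vanish, and one obtains $[X_\alpha(u),X_\beta(v)] = X_{\alpha_i-\alpha_\ell}(uv)$, i.e. $N_{11}^{\alpha\beta}(u,v) = uv$. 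In Case (2), $\alpha+\beta$ is a long root $\pm 2\alpha_i$; now the single relevant entry of $PQ-QP$ receives one contribution from $PQ$ and one from $QP$ that are $\tau$-conjugate to each other, and these assemble exactly into $\Tr(u_{-\eps\omega}v)$ — tracking the signs $\eps,\omega$ and the $\tau$-twists built into the short root morphisms yields the stated formula, and again the cubic and quartic terms vanish. In Case (3), $\alpha$ short and $\beta = -2\eps\alpha_i$ long, we have $\alpha+\beta$ short and $2\alpha+\beta = 2\omega\alpha_j$ long, so there are genuinely two output factors: the quadratic part $PQ-QP$ gives $N_{11}^{\alpha\beta}(u,v) = \omega u_{-c_{ij}}v$, while the unique surviving cubic term — a product of one elementary matrix from $X_\beta(v)$ with two from $X_\alpha(u)$ — produces the norm factor $u\ov u$ and gives $N_{21}^{\alpha\beta}(u,v) = -\eps\omega\,vu\ov u$; one then verifies directly that the two factors $X_{\alpha+\beta}(\cdot)$ and $X_{2\alpha+\beta}(\cdot)$ commute, as promised by Remark~\ref{G=E}(a). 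In all three cases the formulas for $N_{ij}^{\beta\alpha}$ follow from the identity $[X_\beta(v),X_\alpha(u)] = [X_\alpha(u),X_\beta(v)]\inv$ together with the commutativity of the output factors.

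It remains to observe that $N_{11}^{\alpha\beta}$ is surjective whenever it is defined: in Case (1) it is the multiplication map $(u,v)\mapsto uv$ on $R_L$, surjective since one may take $v=1$; in Case (3) it is $(u,v)\mapsto \omega u_{-c_{ij}}v$ with $u$ ranging over $R_L$ and $v$ over $R$, again surjective onto $R_L$ by taking $v=1$ and using that $u\mapsto u_{-c_{ij}}$ is a bijection of $R_L$; and in Case (2) it is $(u,v)\mapsto \omega\,\Tr(u_{-\eps\omega}v)$ with target $R$, which is surjective because $\Tr\colon R_L\to R$ is already surjective on the subring $R\subseteq R_L$, where it acts as multiplication by $2$ (a unit, since $k$ has characteristic $0$) — so one fixes $u$ with $u_{-\eps\omega}=1$.

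The only real difficulty here is bookkeeping: correctly propagating the signs coming from the fact that the negative root morphisms are transposes of the positive ones, the $\tau$-conjugations inside the short root groups, and the convention $i'=\min(i,j)$, $j'=\max(i,j)$; and, in Case (3), isolating the single surviving cubic term and checking that the two output factors commute so that the split into $N_{11}$ and $N_{21}$ is well defined. None of this presents a conceptual obstacle. (Alternatively, since the root groups above are restrictions of root subgroups of $\mathrm{SL}_{2n}$, one could deduce all of these relations from the Chevalley commutator formula in type $\mathsf{A}_{2n-1}$; but with the explicit matrices in hand the direct computation is just as quick.)
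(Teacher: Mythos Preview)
Your proposal is correct and matches the paper's approach: the paper gives no proof for this lemma, stating only that ``by direct calculation, one obtains explicit formulas for the polynomial maps $N_{ij}^{\alpha\beta}$,'' and your sketch carries out precisely that calculation using the explicit matrix formulas for the root group morphisms. Your expansion $(1+P)(1+Q)(1-P)(1-Q) = 1 + (PQ-QP) + (QPQ-PQP) + PQPQ$ under $P^2=Q^2=0$, the case analysis, and the surjectivity checks are all sound.
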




Next, as we indicated in \S\ref{S-Introduction}, a key step in the proof of our main result involves restricting a given representation $\rho : G(k) \to \mathrm{GL}_m(K)$ to the subgroup $G_0(k) = \mathrm{Sp}_{2n}(k).$ Our choice of $H$ makes this split subgroup simple to describe. It is
\begin{equation}\label{E-SplitSubgroup}
G(k) \cap \mathrm{GL}_{2n}(k) = \{ X \in \mathrm{SL}_{2n}(k) \mid X^t H X = H \} = \mathrm{Sp}_{2n}(k) = G_0(k).
\end{equation}
Now, recall that if $\beta \in \Phi_k$ is a short root, then $V_{\beta}(k) = L = k \oplus k \sqrt{d}.$ Our calculation then shows that $G_0(k)$ is the subgroup of $G(k) = E(k)$ generated by the images of the maps $X_\alpha(k) : V_{\alpha}(k) \to G(k)$ for long roots $\alpha$ and by the images of the maps $X_{\beta}(k) : V_{\beta}(k) \to G(k)$ restricted to the first component for long roots $\beta.$

For later use, following Steinberg (see \cite[Ch. 3]{Steinberg}), we define the elements
\begin{equation}
\label{SteinbergElements}
	w_\alpha(v) = X_\alpha(v) \cdot X_{-\alpha}(-v \inv) \cdot X_\alpha(v)  \ \ \ \text{and} \ \ \ h_\alpha(v) = w_\alpha(v) \cdot w_\alpha(1) \inv
\end{equation}
for $\alpha \in \Phi_k$ and $v \in V_{\alpha}(k)^{\times}$ (thus, in particular, $w_{\alpha}(1), h_{\alpha}(1) \in G_0(k)$). Then we have the following analogue of Steinberg's relation (R7)
\begin{equation}\label{SteinbergElements1}
w_\alpha(1) \cdot X_\beta(v) \cdot w_\alpha(1) \inv = X_{w_\alpha \beta}(\varphi v),
\end{equation}
where $\varphi v = \pm v_{\pm 1}$ (so that $\varphi^2 = \mathrm{Id}$) and $w_{\alpha} \beta$ denotes the action of the corresponding Weyl group element $w_\alpha$ on the root $\beta.$



\subsection{Steinberg groups}

 In this subsection, we recall Stavrova's \cite{Stavrova} generalization of the usual Steinberg group (which, in turn, is inspired by a construction of Deodhar \cite{Deodhar}) and establish several properties that will be needed in subsequent sections. Although Stavrova works in the general context of reductive group schemes, for the sake of concreteness, we will restrict ourselves to the case $G = \SU_{2n}(L,h).$

As in the classical setting, for a $k$-algebra $R$, the generalized Steinberg group $\wt{G}(R)$ is the abstract group generated by symbols $\wt{X}_{\alpha}(v)$, for $\alpha\in \Phi_k$ and $v \in V_{\alpha}(R)$, subject to the relations in Theorem \ref{root subgroup maps} written in terms of the $\wt{X}_{\alpha}$. More precisely, we have the following.

\begin{definition}
 Let $R$ be a $k$-algebra. We define $\wt{G}(R)$ to be the group generated by symbols $\wt{X}_{\alpha}(v)$, for all $\alpha\in \Phi_k$ and all $v \in V_{\alpha}(R)$, subject to the following relations:
\begin{enumerate}
	\item[(R1)] For $\alpha \in \Phi_k$ and $v,w \in V_\alpha(R)$,
	\[
		\wt X_\alpha(v) \cdot \wt X_\alpha(w) = \wt X_\alpha(v+w);
	\]
	\item[(R2)] For $\alpha, \beta \in \Phi_k$ such that $\alpha \neq \pm \beta$, and for all $u \in V_\alpha(R), v \in V_\beta(R)$,
	\[
		\Big[ \wt X_\alpha(u), \wt X_\beta(v) \Big] = \prod_{\substack{{i,j \ge 1} \\ i\alpha+j\beta \in \Phi_k}} \wt X_{i\alpha+j\beta} \Big( N_{ij}^{\alpha \beta}(u,v) \Big);
	\]
	where $N_{ij}^{\alpha \beta}$ are the same maps as in Theorem \ref{root subgroup maps}(3).
\end{enumerate}
\end{definition}

\noi It is clear that this construction is functorial in $R$. Furthermore, for every $k$-algebra $R$, we have a natural surjective homomorphism
\[
	\pi_{R}:\wt{G}(R) \to E(R) \qquad \wt X_\alpha(v) \mapsto X_\alpha(v).
\]
The main result of this subsection is the following statement, which partially extends \cite[Theorem 1.3]{Stavrova}.

\begin{proposition}\label{central kernel}
Suppose $R$ is a $k$-algebra that is a finite product of local $k$-algebras. Then $\ker \pi_R$ is a central subgroup of $\wt{G}(R)$.
\end{proposition}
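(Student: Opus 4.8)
The plan is to reduce to the case where $R$ is local (using compatibility with finite products from Remark \ref{G=E}(b), which holds for both $E$ and $\wt{G}$, together with the fact that a product of central subgroups is central), and then to show that $\ker \pi_R$ lies in the center by exhibiting enough conjugation relations. The key point is that $\wt{G}(R)$ is generated by the $\wt{X}_\alpha(v)$, so it suffices to show that any $z \in \ker \pi_R$ commutes with each generator $\wt{X}_\alpha(v)$. Following Stavrova's strategy (and the classical Steinberg argument), I would first establish that for each relative root $\alpha$, the ``unipotent'' subgroup $\wt{U}_\alpha(R) := \langle \wt{X}_\alpha(v) : v \in V_\alpha(R)\rangle$ maps isomorphically onto $U_\alpha(R) \subset G(R)$ under $\pi_R$; this follows from relation (R1), since $\wt{X}_\alpha$ is a homomorphism from $(V_\alpha(R),+)$ and the composite $V_\alpha(R) \to \wt{U}_\alpha(R) \to U_\alpha(R)$ is the isomorphism $X_\alpha$. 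More generally, for a strictly parabolic $P$ with unipotent radical generated by the $\wt{X}_\alpha$ for $\alpha$ in the ``positive'' roots, one shows $\pi_R$ restricts to an isomorphism on $\wt{U}_P(R)$, using the Chevalley commutator formula (R2) to put products in normal form and the corresponding statement in $G(R)$.

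Next I would use the generalized Bruhat-type decomposition. The element $z \in \ker\pi_R$ can be analyzed by writing a general element of $\wt{G}(R)$ using the generators; since $R$ is local of rank $\ge 2$, one has $G(R) = E(R)$ (Remark \ref{G=E}(b)) and $G(R)$ admits a decomposition governed by the relative Weyl group and the split torus $S(R)$. The Steinberg elements $\wt{w}_\alpha(v)$ and $\wt{h}_\alpha(v)$ defined as in \eqref{SteinbergElements} (but with tildes) satisfy the lifts of relations \eqref{SteinbergElements1}, and these lift the Weyl and torus elements. Concretely, I would show: (i) $z$ normalizes each $\wt{U}_\alpha(R)$ because $\pi_R(z) = 1$ acts trivially and $\pi_R$ is injective on each $\wt{U}_\alpha(R)$ (more care is needed here — see below); (ii) using the commutator relations and the explicit surjectivity of the maps $N_{11}^{\alpha\beta}$ from Lemma \ref{N is surjective}, one deduces that $z$ centralizes $\wt{X}_\alpha(v)$ for all short and long $\alpha$, hence centralizes all of $\wt{G}(R)$.

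The technical heart is step (i): from $\pi_R(z)=1$ one does not immediately get that $z \wt{X}_\alpha(v) z^{-1} = \wt{X}_\alpha(v)$, only that $\pi_R(z\wt{X}_\alpha(v)z^{-1}) = X_\alpha(v)$. To upgrade this, the standard device is to show that the conjugation action of $\wt{G}(R)$ on the set of ``root subgroups'' $\{\wt{U}_\alpha(R)\}$ factors through $G(R)$ — i.e., that $\wt{U}_\alpha(R)$ depends functorially only on data that $\pi_R$ preserves — and then to check that an element conjugating $\wt{U}_\alpha(R)$ into itself while inducing the identity on $\pi_R(\wt{U}_\alpha(R)) = U_\alpha(R)$ actually centralizes $\wt{U}_\alpha(R)$, because $\pi_R|_{\wt{U}_\alpha(R)}$ is injective. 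The delicate part is controlling what happens when $z$ is written in terms of generators from several root subgroups simultaneously: one needs a normal form (a Steinberg-type presentation result) to see that conjugation by $z$ permutes the $\wt{U}_\alpha$ according to $\pi_R(z) = 1$, i.e. trivially. This is exactly where I expect the main obstacle to lie, and where one must invoke (and adapt to the quasi-split unitary setting) the relevant parts of \cite[Theorem 1.3]{Stavrova} and its proof, checking that the hypotheses — in particular isotropic rank $\ge 2$, which holds here since $\Phi_k$ is of type $\mathsf{C}_n$ with $n \ge 2$ — are satisfied and that the arguments go through verbatim using the generators $\wt{X}_\alpha$ and relations (R1), (R2), together with the explicit commutator data in Lemma \ref{N is surjective}.

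Finally, once $z$ is shown to centralize every $\wt{X}_\alpha(v)$, it is central in $\wt{G}(R)$ since these generate, completing the reduction and the proof in the local case; the general case follows by the product decomposition as noted at the outset.
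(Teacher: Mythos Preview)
Your reduction to the local case via the product decomposition is exactly what the paper does, and it is the only real content of the paper's proof: once $R = R_1 \times \cdots \times R_n$ with each $R_i$ local, one uses Lemma~\ref{Steinberg products} (which the paper proves separately) together with Remark~\ref{G=E}(b) to identify $\pi_R$ with $\prod_i \pi_{R_i}$ and hence $\ker\pi_R$ with $\prod_i \ker\pi_{R_i}$.

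Where you diverge is in the treatment of the local case. The paper does not redo any of the Steinberg--Bruhat analysis you outline; it simply invokes \cite[Theorem~1.3]{Stavrova} as a black box for each $R_i$. No adaptation is needed: Stavrova's theorem is stated for an arbitrary isotropic reductive group scheme over a local ring satisfying the rank $\ge 2$ condition, and $G = \SU_{2n}(L,h)$ viewed over the local $k$-algebra $R_i$ falls squarely within its scope. So your long middle section --- the injectivity of $\pi_R$ on $\wt U_\alpha(R)$, the normal-form argument, the ``technical heart'' in step (i) --- is a sketch of how one might reprove Stavrova's theorem in this special case, not something the paper undertakes. Your instinct that this is where the genuine difficulty lies is correct (and your sketch is in the right spirit, though the step where you pass from ``$\pi_R(z)=1$'' to ``$z$ normalizes each $\wt U_\alpha(R)$'' is not justified by what you wrote and really does require the full Bruhat-type machinery), but for the purposes of this proposition that work has already been done in the cited reference. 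The paper's contribution here is Lemma~\ref{Steinberg products}, which is what lets one pass from Stavrova's local statement to finite products.
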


\noi We will give the proof of Proposition \ref{central kernel} below, after several preliminary observations.

\begin{lemma}
\label{product of other generators}
Any generator $\wt X_\alpha(v)$ of $\wt{G}(R)$ can be written as a product of generators $\wt X_{\gamma_i}(u_i)$ with $\gamma_i \neq \alpha$. Furthermore, $\wt X_\alpha(v)$ is contained in the commutator subgroup $[\wt{G}(R), \wt{G}(R)].$ In particular, $\wt{G}(R)$ and $E(R)$ are perfect groups. 
\end{lemma}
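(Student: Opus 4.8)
The plan is to exploit the Chevalley commutator relations of type $\mathsf{C}_n$ to write each generator as a commutator, using that the root system $\mathsf{C}_n$ (for $n \geq 2$) has enough roots so that every $\alpha \in \Phi_k$ can be expressed as $i\gamma + j\delta$ for suitable $\gamma, \delta \in \Phi_k$ with $\gamma \neq \pm\delta$ and with the surjectivity of the relevant $N_{ij}^{\gamma\delta}$ map guaranteed by Lemma \ref{N is surjective}. Concretely, I would split into two cases according to whether $\alpha$ is long or short. If $\alpha = \alpha_i + \alpha_j$ is short (or any short root, after relabelling), then by part (1) of Lemma \ref{N is surjective} we can write $\alpha = (\alpha_i - \alpha_\ell) + (\alpha_\ell - \alpha_j)$ for a third index $\ell$ (available since $n \geq 2$ gives at least three "slots"), and $N_{11}$ for this pair is $(u,v) \mapsto uv$, which is surjective onto $V_\alpha(R) = R_L$; so $\wt X_\alpha(v) = [\wt X_{\alpha_i - \alpha_\ell}(u_1), \wt X_{\alpha_\ell - \alpha_j}(u_2)]$ for appropriate $u_1, u_2$ (choosing, e.g., $u_1 = v$, $u_2 = 1$), and the two roots $\alpha_i - \alpha_\ell$, $\alpha_\ell - \alpha_j$ are both distinct from $\alpha$. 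If $\alpha = 2\alpha_i$ is long, then by part (2) of Lemma \ref{N is surjective}, writing $\alpha = (\alpha_i - \alpha_j) + (\alpha_i + \alpha_j)$, the map $N_{11}$ is $(u,v) \mapsto \omega\,\Tr(u_{-\eps\omega} v)$, and since $\Tr : R_L \to R$ is surjective (it extends $\Tr_{L/k}$, which is surjective as $\characteristic k = 0$), we can again realize $\wt X_{2\alpha_i}(x)$ as a single commutator of generators indexed by roots different from $2\alpha_i$; one must also invoke Remark \ref{G=E}(a) to note that the right-hand product of (R2) has at most the one term $2\alpha_i$ in this instance (the root $\alpha_i - \alpha_j + 2(\alpha_i+\alpha_j)$ etc. not lying in $\Phi_k$), so the commutator equals exactly $\wt X_{2\alpha_i}(\omega\,\Tr(\cdots))$. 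This simultaneously proves that $\wt X_\alpha(v)$ lies in the commutator subgroup.

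Having shown each generator is a commutator (hence lies in $[\wt G(R), \wt G(R)]$) and is a product of generators attached to other roots, the remaining assertions follow formally: $\wt G(R)$ is generated by elements of $[\wt G(R), \wt G(R)]$, so $\wt G(R) = [\wt G(R), \wt G(R)]$, i.e. $\wt G(R)$ is perfect; and applying the surjection $\pi_R : \wt G(R) \to E(R)$, which sends $\wt X_\alpha(v) \mapsto X_\alpha(v)$ and hence commutators to commutators, shows $E(R)$ is perfect as well. The "product of other generators" clause is really just the observation that a commutator $[\wt X_\gamma(u_1), \wt X_\delta(u_2)] = \wt X_\gamma(u_1)\wt X_\delta(u_2)\wt X_\gamma(u_1)^{-1}\wt X_\delta(u_2)^{-1} = \wt X_\gamma(u_1)\wt X_\delta(u_2)\wt X_\gamma(-u_1)\wt X_\delta(-u_2)$ by (R1), which is visibly a product of generators none of which is indexed by $\alpha$.

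I expect the main obstacle to be purely bookkeeping: verifying, root by root within $\mathsf{C}_n$, that the decomposition $\alpha = \gamma + \delta$ with $\gamma \neq \pm\delta$ always exists with the third index $\ell$ genuinely available — this is where the hypothesis $n \geq 2$ is used, and one should check the edge cases (e.g. the short roots $\pm\alpha_1 \pm \alpha_2$ when $n = 2$, where only indices $1,3$ are "odd" in the convention of the excerpt, i.e. only two hyperbolic planes, so one must make sure a valid pair of roots summing to the target still exists). A secondary subtlety is making sure that in the long-root case the Chevalley product on the right-hand side of (R2) really collapses to a single term, so that the commutator is exactly $\wt X_\alpha(\text{something surjective})$ rather than a product also involving, say, a short root; Remark \ref{G=E}(a) together with a direct inspection of which $i\gamma + j\delta$ lie in $\Phi_k$ handles this. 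Once these combinatorial checks are in place, the argument is short.
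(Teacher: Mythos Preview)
Your long-root case is correct and in fact coincides with the paper's argument: writing a long root $\alpha$ as a sum of two short roots $\gamma_1 + \gamma_2$ gives a commutator relation with a single term on the right, and surjectivity of $N_{11}^{\gamma_1\gamma_2}$ (Lemma~\ref{N is surjective}) finishes the case.

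The gap is in your short-root case, and it is exactly the ``edge case'' you flagged but did not resolve. In $\mathsf{C}_2$ there are only two indices (in the paper's convention, the odd integers $1$ and $3$), so there is \emph{no} third index $\ell$ available: a short root such as $\alpha_1 - \alpha_3$ simply cannot be written as a sum of two short roots in $\mathsf{C}_2$. (Your parenthetical claim that ``$n \geq 2$ gives at least three slots'' is off by one --- it gives at least two.) So the short $=$ short $+$ short decomposition is unavailable precisely when $n = 2$, and your argument breaks there.

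The paper avoids this by reversing the order of the two cases and using a different decomposition for short roots. First it handles long roots as you do. Then, for a short root $\alpha$, it writes $\alpha = \gamma_1 + \gamma_2$ with $\gamma_1$ \emph{long} and $\gamma_2$ short (always possible in $\mathsf{C}_n$ for $n \geq 2$, needing only two indices). The commutator $[\wt X_{\gamma_1}(u_1), \wt X_{\gamma_2}(u_2)]$ now has two terms on the right: $\wt X_\alpha(N_{11}^{\gamma_1\gamma_2}(u_1,u_2)) \cdot \wt X_{\gamma_1 + 2\gamma_2}(N_{12}^{\gamma_1\gamma_2}(u_1,u_2))$, with $\gamma_1 + 2\gamma_2$ a long root. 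Choosing $u_1, u_2$ so that $N_{11}^{\gamma_1\gamma_2}(u_1,u_2) = v$ and moving the second factor to the other side expresses $\wt X_\alpha(v)$ as a product of generators indexed by roots $\neq \alpha$. For the commutator-subgroup claim, one then invokes the already-settled long-root case to see that $\wt X_{\gamma_1+2\gamma_2}(\cdots)^{-1}$ lies in $[\wt G(R), \wt G(R)]$, hence so does $\wt X_\alpha(v)$. The remaining conclusions (perfectness of $\wt G(R)$ and $E(R)$) follow exactly as you wrote.
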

\begin{proof}
First, suppose $\alpha$ is a long root. Then we can write $\alpha = \gamma_1 + \gamma_2$ for appropriate short roots $\gamma_1$ and $\gamma_2$, and it follows from (R2) that
\[
	\Big[ \wt X_{\gamma_1}(u_1), \wt X_{\gamma_2}(u_2) \Big] = \wt X_\alpha \Big( N_{11}^{\gamma_1 \gamma_2}(u_1, u_2) \Big)
\]
Since $N_{11}^{\gamma_1 \gamma_2}$ is surjective according to Lemma \ref{N is surjective}, we can find $u_1 \in V_{\gamma_1}(R)$ and $u_2 \in V_{\gamma_2}(R)$ so that $N_{11}^{\gamma_1 \gamma_2}(u_1, u_2) = v$, which yields our claim in this case.

Next, suppose $\alpha$ is short. Then we can write $\alpha = \gamma_1 + \gamma_2$ for an appropriate long root $\gamma_1$ and short root $\gamma_2$. Hence, by (R2) we have
\[
	\Big[ \wt X_{\gamma_1}(u_1), \wt X_{\gamma_2}(u_2) \Big] = \wt X_{\alpha} \Big( N_{11}^{\gamma_1 \gamma_2}(u_1, u_2) \Big) \cdot \wt X_{\gamma_1 + 2 \gamma_2} \Big( N_{12}^{\gamma_1 \gamma_2} (u_1, u_2) \Big)
\]
Again by Lemma \ref{N is surjective} we can choose $u_1, u_2$ so that $N_{11}^{\gamma_1 \gamma_2}(u_1 ,u_2) = v$. Multiplying both sides by $\wt X_{\gamma_1 + 2 \gamma_2} \Big( N_{12}^{\gamma_1 \gamma_2} (u_1, u_2)\Big)^{-1}$ then yields our first claim. Furthermore, since $\gamma_1 + 2\gamma_2$ is a long root, the preceding case shows that $\wt X_{\gamma_1 + 2 \gamma_2} \Big( N_{12}^{\gamma_1 \gamma_2} (u_1, u_2)\Big)^{-1}$ is contained in the commutator subgroup, and hence so is $\wt{X}_{\alpha}(v).$

Thus, since all generators of $\wt{G}(R)$ are contained in $[\wt{G}(R), \wt{G}(R)]$, it follows that $\wt{G}(R)$ is perfect. Since the natural map $\pi : \wt{G}(R) \to E(R)$ is surjective, we conclude that $E(R)$ is perfect as well.
\end{proof}

\begin{lemma}
\label{Steinberg products}
For any $k$-algebras $R_1, \dots, R_n$, we have $\wt{G}(R_1 \times \cdots  \times R_n) = \wt{G}(R_1) \times \cdots \times \wt{G}(R_n)$, i.e. the Steinberg group commutes with finite products.
\end{lemma}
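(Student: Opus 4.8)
The plan is to reduce to the case $n = 2$ by an immediate induction and then exhibit mutually inverse group homomorphisms between $\wt{G}(R_1 \times R_2)$ and $\wt{G}(R_1) \times \wt{G}(R_2)$. Before doing so, I would record the preliminary observation that each $V_\alpha$ is a vector $k$-group scheme (so $V_\alpha \iso \G_a$ for $\alpha$ long and $V_\alpha(R) = R_L$ for $\alpha$ short), hence commutes with finite products: $V_\alpha(R_1 \times R_2) = V_\alpha(R_1) \times V_\alpha(R_2)$; moreover, since each $N_{ij}^{\alpha\beta}$ is a morphism of $k$-schemes, it is compatible with this product decomposition, and since it is homogeneous of degree $\geq 1$ in each variable (Theorem \ref{root subgroup maps}(3)), it vanishes whenever one of its arguments is $0$. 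These two facts are what make all the relation-checking below routine.

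The easy direction is $\phi : \wt{G}(R_1 \times R_2) \to \wt{G}(R_1) \times \wt{G}(R_2)$: the two projections $R_1 \times R_2 \to R_i$ are unital ring homomorphisms, so functoriality of $\wt{G}$ gives homomorphisms $\wt{G}(R_1 \times R_2) \to \wt{G}(R_i)$, and together they yield $\phi$, which on generators is $\wt{X}_\alpha((v_1,v_2)) \mapsto (\wt{X}_\alpha(v_1), \wt{X}_\alpha(v_2))$.

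The reverse direction requires more care, since $R_i$ is not a unital subring of $R_1 \times R_2$ and functoriality does not apply. Instead I would define $\psi_1 : \wt{G}(R_1) \to \wt{G}(R_1 \times R_2)$ directly on generators by $\wt{X}_\alpha(v) \mapsto \wt{X}_\alpha((v,0))$, and similarly $\psi_2$. Well-definedness amounts to checking (R1), which is immediate from additivity in $V_\alpha$, and (R2), which follows because $N_{ij}^{\alpha\beta}((u,0),(v,0)) = (N_{ij}^{\alpha\beta}(u,v),\,0)$ by the homogeneity remark above. To combine $\psi_1$ and $\psi_2$ into a single homomorphism $\psi : \wt{G}(R_1) \times \wt{G}(R_2) \to \wt{G}(R_1 \times R_2)$, $(g_1,g_2) \mapsto \psi_1(g_1)\psi_2(g_2)$, one needs the images of $\psi_1$ and $\psi_2$ to commute elementwise, and \emph{this is the one genuinely nontrivial point.} For generators $\wt{X}_\alpha((u,0))$ and $\wt{X}_\beta((0,v))$ with $\beta = \alpha$ this is (R1), and with $\beta \neq \pm\alpha$ it follows from (R2) together with $N_{ij}^{\alpha\beta}((u,0),(0,v)) = (0,0)$; but for $\beta = -\alpha$ the defining relations say nothing, so I would invoke Lemma \ref{product of other generators} inside $\wt{G}(R_2)$ to write $\wt{X}_{-\alpha}(v)$ as a product of generators $\wt{X}_{\gamma_i}(u_i)$ with $\gamma_i \neq -\alpha$, apply $\psi_2$, and then observe that $\wt{X}_\alpha((u,0))$ commutes with each $\wt{X}_{\gamma_i}((0,u_i))$ by the already-settled cases ($\gamma_i = \alpha$ via (R1), $\gamma_i \neq \pm\alpha$ via (R2)).

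Finally, I would check that $\phi$ and $\psi$ are mutually inverse by evaluating on generators: $\psi\phi(\wt{X}_\alpha((v_1,v_2))) = \wt{X}_\alpha((v_1,0))\,\wt{X}_\alpha((0,v_2)) = \wt{X}_\alpha((v_1,v_2))$ by (R1), and $\phi\psi$ is visibly the identity on the generators $(\wt{X}_\alpha(v_1),1)$ and $(1,\wt{X}_\alpha(v_2))$ of the product. Induction on $n$, using $R_1 \times \cdots \times R_n = R_1 \times (R_2 \times \cdots \times R_n)$, then gives the general statement. As indicated, the main obstacle is the commutativity of $\psi_1(\wt{G}(R_1))$ and $\psi_2(\wt{G}(R_2))$ for mutually opposite roots, which is precisely where Lemma \ref{product of other generators} (and, through it, the surjectivity statement of Lemma \ref{N is surjective}) is essential.
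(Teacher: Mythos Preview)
Your proposal is correct and follows essentially the same approach as the paper's proof: both reduce to the case $n=2$, define the forward map by functoriality via the projections, define the inverse on generators $\wt X_\alpha(v)\mapsto\wt X_\alpha((v,0))$ (resp.\ $(0,v)$), and identify the one nontrivial point as the commutation of the two images in the $\beta=-\alpha$ case, which is handled by invoking Lemma~\ref{product of other generators} to rewrite $\wt X_{-\alpha}$ as a product of generators associated to roots $\gamma_i\neq -\alpha$. Your organization---first checking that $\psi_1,\psi_2$ are individually homomorphisms and then proving their images commute---is a slightly different packaging of the same verification the paper performs when checking that its map $s$ preserves the relations (i), (ii), (iii), but the content is identical.
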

\begin{proof}
By induction, it suffices to show that $\wt{G}(A \times B) \iso \wt{G}(A) \times \wt{G}(B)$ for any $k$-algebras $A$ and $B$. We will do this by an argument similar to the one used in the proof of \cite[Lemma 2.12]{Stein}. First, we note that the projections of $A \times B$ onto its components induce a surjective group homomorphism
\[
	p:\wt{G}(A \times B) \to \wt{G}(A) \times \wt{G}(B), \qquad \wt X_\alpha(v) = \wt X_\alpha(v_1, v_2) \mapsto \Big( \wt X_\alpha(v_1), \wt X_\alpha(v_2) \Big)
\]
Next we define a map in the reverse direction by
\begin{align*}
	s:\wt{G}(A) \times \wt{G}(B) &\to \wt{G}(A \times B) \\
	\Big( \wt X_\alpha(v_1), 1 \Big) &\mapsto \wt X_\alpha(v_1, 0) \\
	\Big( 1, \wt X_\alpha(v_2) \Big) &\mapsto \wt X_\alpha(0, v_2)
\end{align*}
(This defines $s$ on a set of generators for $\wt{G}(A) \times \wt{G}(B)$, and we then extend it to the whole group by multiplicativity.) It follows immediately from the definitions that $p \circ s$ and $s \circ p$ are the respective identity maps on generating sets. Thus, it remains to show that $s$ is a homomorphism by verifying that $s$ takes all the defining relations in $\wt G(A) \times \wt G(B)$ to relations in $\wt G(A \times B)$. We need to check that $s$ preserves three kinds of relations:
\begin{enumerate}[(i)]
	\item The defining relations of $\wt G(A)$ applied to the generators $\lp \wt X_\alpha(v_1), 1 \rp$.
	\item The defining relations of $\wt G(B)$ applied to the generators $\lp 1, \wt X_\alpha(v_2) \rp$.
	\item $\left[ \lp \wt X_\alpha(a), 1 \rp, \lp 1, \wt X_\beta(b) \rp \right] =1$ for all $\alpha, \beta \in \Phi_k$ and all $a \in V_\alpha(A), b \in V_\beta(B)$.
\end{enumerate}
It is clear that $s$ preserves (i) and (ii), so it remains to show that $s$ preserves (iii). First consider the case $\alpha \neq - \beta$. Let $a \in V_\alpha(A)$ and $b \in V_\beta$. Then
\begin{align*}
	s \left[ \lp \wt X_\alpha(a), 1 \rp, \lp 1, \wt X_\beta(b) \rp \right] &= \left[ \wt X_\alpha(a,0), \wt X_\beta(0,b) \right] = \prod_{\substack{i,j \ge 1 \\ i \alpha + j \beta \in \Phi_k}} X_{i \alpha + j \beta} \lp N_{ij}^{\alpha \beta} \big( (a,0), (0,b) \big) \rp
\end{align*}
Since $N_{ij}^{\alpha \beta}$ is homogeneous of degree $i$ in the first argument and of degree $j$ in the second argument, it is at least linear in each argument. So each term of $N_{ij}^{\alpha \beta}\big((a,0), (0,b) \big)$ includes a factor of $(a,0) \cdot (0,b) = (0,0) = 0 \in A \times B$. Hence $N_{ij}^{\alpha \beta} \big( (a,0), (0,b) \big) = 0$ and the product is trivial, so $s$ preserves (iii) when $\alpha \neq - \beta$.

Now consider relation (iii) with $\alpha = - \beta$. For any $b \in B$, by Lemma \ref{product of other generators}, the element $\wt X_{-\alpha}(0,b) \in \wt G(0 \times B) \subset \wt G(A \times B)$ can be written as a product
\[
	\wt X_{-\alpha}(0,b) = \prod_i \wt X_{\gamma_i} (0, u_i)
\]
where $\gamma_i \neq -\alpha$ for all $i$, with each $u_i \in B$. Thus, we have
\begin{align*}
	s \left[ \lp \wt X_\alpha(a), 1 \rp, \lp 1, \wt X_{-\alpha}(b) \rp \right] &= \left[ \wt X_\alpha(a,0), \wt X_{-\alpha}(0,b) \right] = \left[ \wt X_\alpha(a,0), \prod_i \wt X_{\gamma_i} (0,u_i) \right]
\end{align*}
Since $\gamma_i \neq -\alpha$, by the previous case, $\wt X_\alpha(a,0)$ commutes with each factor $\wt X_{\gamma_i}(0,u_i)$. Consequently, it commutes with the whole product, hence the last commutator vanishes. This shows that $s$ preserves (iii) when $\alpha = - \beta$, completing the proof.
\end{proof}

\noi We now turn to:

\vskip1mm

\noindent {\it Proof of Proposition \ref{central kernel}.}
Suppose $R = R_1 \times \cdots \times R_n$, where $R_1, \dots, R_n$ are local $k$-algebras. Since the elementary subgroup and the Steinberg group both commute with finite products (by Remark \ref{G=E} and Lemma \ref{Steinberg products}, respectively), it follows that
$$
\pi_R = \prod_{i=1}^n \pi_{R_i},
$$
and hence
\[
	\ker \pi_R = \prod_{i=1}^n \ker \pi_{R_i} \subset \prod_{i = 1}^n \wt{G}(R_i) \iso \wt{G}(R).
\]
Applying \cite[Theorem 1.3]{Stavrova} to each local factor $R_i$, we see that $\ker \pi_{R_i}$ is central in $\wt{G}(R_i)$. Consequently, $\ker \pi_R$ is central in $\wt{G}(R)$, as claimed. $\Box$

\section{Constructing the algebraic ring $A$ and lifting $\rho$ to a representation of the Steinberg group}\label{S-AlgebraicRing}

Let $G = \SU_{2n}(L,h)$ and set $G_0(k) = \mathrm{Sp}_{2n}(k)$ to be the split subgroup of $G(k)$ described in (\ref{E-SplitSubgroup}). The purpose of this section is twofold. First, we associate an algebraic ring $A$ to an abstract representation $\rho \colon G(k) \to \GL_m(K).$ Then we show that $\rho$ can be lifted to a representation $\wt{\sigma}$ of the Steinberg group $\wt{G}(A).$

To fix notations, given an abstract representation $\rho \colon G(k) \to \GL_m(K)$ (with $K$ an algebraically closed field of characteristic 0), we set $H = \ov{\rho(G(k))}$ to be the Zariski closure of the image of $\rho.$ Furthermore, we let $\rho_0 \colon G_0(k) \to \GL_m(K)$ denote the restriction $\rho \vert_{G_0(k)}.$

By \cite[Theorem 3.1]{IR}, we can associate to $\rho_0$ an algebraic ring $A$, together with a ring homomorphism $f \colon k \to A$ with Zariski-dense image, as follows. Recall that if $\alpha \in \Phi_k$ is a long root, then $V_{\alpha}(k) = k$, whereas if $\alpha$ is short, then $V_{\alpha}(k) = L = k \oplus k \sqrt{d}.$
For $\alpha \in \Phi_k$, we define
\[
	A_\alpha = \ov{\rho(X_\alpha(k))}
\]
and
\[
	f_\alpha \colon k \to A_\alpha, \qquad u \mapsto \rho_0(X_\alpha(u)).
\]
Recall that for $\alpha$ short, the root subgroup $X_\alpha(V_\alpha(k))$ has dimension 2, and in this case, 
$A_\alpha$ is the closure of the image of the one-dimensional subgroup $X_\alpha(k) \subset X_\alpha(V_\alpha(k))$, arising from the natural embedding $k \into L = V_\alpha(k)$.

As shown in \cite[Theorem 3.1]{IR}, each $A_{\alpha}$ has the structure of an algebraic ring (i.e. an affine algebraic variety with a ring structure defined by regular maps) ---  we recall that the addition operation is obtained simply by restricting matrix multiplication in $H$ to $A_{\alpha}$, whereas the multiplication operation is defined using the Steinberg commutator relations. Moreover, for any $\alpha, \beta \in \Phi_k$, there exists an isomorphism $\pi_{\alpha \beta} \colon A_{\alpha} \to A_{\beta}$ of algebraic rings such that $\pi_{\alpha \beta} \circ f_{\alpha} = f_{\beta}.$ We denote this common algebraic ring by $A$, and, for each $\alpha \in \Phi_k$, we fix an isomorphism of algebraic rings $\pi_{\alpha} \colon A \to A_{\alpha}$ such that $\pi_{\alpha \beta} \circ \pi_\alpha = \pi_\beta$. Then, by construction, we have a ring homomorphism $f \colon k \to A$ with Zariski-dense image such that $\pi_{\alpha} \circ f = f_{\alpha}$ for all $\alpha \in \Phi_k$, as well as (injective) regular maps $\psi_{\alpha}^1 \colon A \to H$ satisfying
$$
(\psi_{\alpha}^1 \circ f)(u) = (\rho_0 \circ X_{\alpha})(u) \ \ \ \text{for all} \ u \in k.
$$



\begin{remark}
\label{properties of A}
Since $k$ and $K$ are both fields of characteristic 0, it follows from Lemma 2.13 and Proposition 2.14 in \cite{IR} that $A$ is a finite-dimensional $K$-algebra. In particular, $A$ is connected and satisfies the hypotheses of Proposition \ref{central kernel}.
\end{remark}

The algebraic ring $A$ plays a pivotal role in the proof of the main results of \cite{IR}. It turns out that $A$ also suffices for the analysis of the representation $\rho.$ The precise statement that is needed in our context will be given in Proposition \ref{existence of A} below. First, we make the following construction. Given a short root $\alpha \in \Phi_k$, let us define
\[
	B_\alpha = \ov{\rho \lp X_\alpha \lp k \sqrt d \rp \rp}
\]
and
\[
	g_\alpha:k \to B_\alpha, \qquad u \mapsto \rho \lp X_\alpha \lp u \sqrt d \rp \rp.
\]
We then have the next statement.

\begin{lemma}
\label{compatibility isomorphism}
Take the short root $\alpha = \alpha_1 - \alpha_3$ and the long root $\beta = -2\alpha_3$. Then there is an isomorphism of algebraic varieties $\pi \colon B_\alpha \to A_\beta$ such that $\pi \circ g_\alpha = f_\beta$.
\end{lemma}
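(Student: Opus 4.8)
The statement asserts that the algebraic ring $B_\alpha$ attached to the "$\sqrt d$-part" of the short root subgroup $X_\alpha$ is, as a variety, identified with the long-root algebraic ring $A_\beta$ in a way compatible with the structure maps from $k$. The natural strategy is to exhibit the isomorphism by exactly the same mechanism that produced the isomorphisms $\pi_{\alpha\beta}\colon A_\alpha\to A_\beta$ in \cite[Theorem 3.1]{IR}: namely, use a Chevalley commutator relation that links $X_\alpha(k\sqrt d)$ to $X_\beta(k)$ and let the resulting regular map on Zariski closures be the desired $\pi$. Concretely, I would choose a second short root $\gamma$ so that $\alpha$ and $\gamma$ bracket to the long root $\beta$ (case (2) of Lemma \ref{N is surjective}, $N_{11}^{\alpha\gamma}(u,v)=\omega\,\Tr(u_{-\eps\omega}v)$), or alternatively use a short root $\gamma$ and a long root so as to land in case (2) or (3); the point is that there is a commutator identity of the shape $[X_\alpha(u),X_\gamma(v)]=X_\beta(\text{bilinear in }u,v)$, with the bilinear map surjective onto $V_\beta(k)=k$ even when $u$ is restricted to $k\sqrt d$.

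**Key steps.** First, I would pin down the precise pair of roots adjacent to $\alpha=\alpha_1-\alpha_3$ whose sum is $\beta=-2\alpha_3$: one wants $\gamma$ with $\alpha+\gamma=\beta$, i.e. $\gamma=-\alpha_1-\alpha_3$, which is short, so we are in case (2) of Lemma \ref{N is surjective} and $[X_\alpha(u),X_\gamma(v)] = X_\beta(N_{11}^{\alpha\gamma}(u,v))$ with $N_{11}^{\alpha\gamma}(u,v)=\pm\Tr(u_{\pm1}v)$. Second, applying $\rho$ and passing to Zariski closures, the commutator map restricted to $u\in X_\alpha(k\sqrt d)$ (with a fixed suitable $v\in X_\gamma(k)$, or allowing $v$ to vary) gives a regular map $B_\alpha\to A_\beta$; I would check it is a morphism of varieties directly from the fact that group commutator and matrix multiplication in $H$ are regular, exactly as in \cite{IR}. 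Third, compatibility $\pi\circ g_\alpha=f_\beta$ is a computation on $k$-points: $g_\alpha(u)=\rho(X_\alpha(u\sqrt d))$ maps under the commutator-with-$X_\gamma(v)$ operation to $\rho(X_\beta(N_{11}^{\alpha\gamma}(u\sqrt d,v)))$, and since $\Tr(u\sqrt d\cdot v)$ (for appropriate $v\in k$) traces out all of $k$ as $u$ ranges over $k$ — because $\Tr_{L/k}(\sqrt d)\ne 0$ in characteristic $0$ — this realizes $f_\beta$ after rescaling $v$; one then records that $\pi$ is the unique extension of this $k$-point identity to the Zariski closures. Fourth, to see $\pi$ is an isomorphism of varieties (not merely a dominant morphism), I would argue as in \cite{IR}: both $B_\alpha$ and $A_\beta$ are closures of images of $\rho$ on one-parameter subgroups with the same "size," the map is injective on $k$-points by injectivity of $\psi^1_\beta$-type maps, and equivariance (Theorem \ref{root subgroup maps}(2)) under the torus forces it to be an isomorphism; alternatively construct the inverse by the reverse commutator relation $N_{11}^{\gamma\alpha}$ of Lemma \ref{N is surjective}(2).

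**Main obstacle.** The delicate point is not producing *a* regular map but producing one that is genuinely an isomorphism of the *algebraic* varieties $B_\alpha$ and $A_\beta$ — i.e. controlling what the closure $B_\alpha=\ov{\rho(X_\alpha(k\sqrt d))}$ actually is. In \cite{IR} the analogous fact for $A_\alpha\cong A_\beta$ rests on the full commutator calculus and the rigidity of the construction; here the subtlety is that $X_\alpha(k\sqrt d)$ is only "half" of the two-dimensional short root group $X_\alpha(L)$, so a priori $B_\alpha$ could differ from $A_\alpha$. The resolution should be that conjugation by a torus element $s$ with $\alpha(s)=\sqrt d\cdot(\text{something in }k^\times)$, combined with relation (2) of Theorem \ref{root subgroup maps}, shows $\rho(X_\alpha(k\sqrt d))$ is a $\rho(S(k))$-translate-type image whose closure carries the same ring structure — so $B_\alpha$ inherits an algebraic ring structure isomorphic to $A_\beta$ via the commutator map above. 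I would therefore spend the bulk of the proof verifying that the commutator map intertwines the respective addition (restriction of matrix multiplication in $H$) and, if needed, multiplication structures, so that $\pi$ is an isomorphism of algebraic varieties as asserted (and in fact respects whatever additional structure is used downstream), the remaining compatibility $\pi\circ g_\alpha=f_\beta$ being the straightforward trace computation above.
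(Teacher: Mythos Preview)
Your overall strategy --- define $\pi$ by taking a commutator in $H$ with a fixed element of the $(\beta-\alpha)$-root subgroup, then verify $\pi\circ g_\alpha=f_\beta$ on $k$-points and pass to Zariski closures --- is exactly what the paper does. However, there is a concrete error in the forward direction and a genuine gap in the construction of the inverse.

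\textbf{The error.} You write that for $v\in k$ the map $u\mapsto \Tr(u\sqrt d\cdot v)$ surjects onto $k$ ``because $\Tr_{L/k}(\sqrt d)\ne 0$ in characteristic $0$.'' But $\Tr_{L/k}(\sqrt d)=\sqrt d+\overline{\sqrt d}=0$, so for $v\in k$ one has $\Tr(u\sqrt d\cdot v)=uv\,\Tr(\sqrt d)=0$ identically, and your proposed $\pi$ collapses $B_\alpha$ to the identity. The fix (which the paper uses) is to commute with an element of $X_{\beta-\alpha}(k\sqrt d)$ rather than $X_{\beta-\alpha}(k)$: setting $\pi(x)=\big[x,\,g_{\beta-\alpha}(-1/(2d))\big]$, one computes $N_{11}^{\alpha,\beta-\alpha}\big(s\sqrt d,\,-1/(2\sqrt d)\big)=-\Tr(-s/2)=s$, so $\pi\circ g_\alpha=f_\beta$.

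\textbf{The gap.} Neither of your proposals for invertibility works as stated. The ``reverse commutator $N_{11}^{\gamma\alpha}$'' still has $\gamma+\alpha=\beta$, so it produces another map \emph{into} $A_\beta$, not out of it. The genuine inverse must start from $X_\beta(t)$, and the relevant commutator $[X_\beta(t),X_{\alpha-\beta}(w)]$ falls under case~(3) of Lemma~\ref{N is surjective}: it equals $X_\alpha\big(N_{11}(t,w)\big)\cdot X_{2\alpha-\beta}\big(N_{12}(t,w)\big)$, with $2\alpha-\beta=2\alpha_1$ a root, so there is an unwanted second factor. The paper's inverse $\nu$ handles this by combining two such commutators, with $w=\mp\sqrt d$, so that the $X_{2\alpha-\beta}$ factors cancel while the $X_\alpha$ factors multiply to $X_\alpha(2t\sqrt d)$; a conjugation by $h_{2\alpha-\beta}(1/2)$ then removes the factor of $2$. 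Your alternative suggestion (``same size'' plus torus equivariance) is not a proof: an explicit regular inverse is needed, and the cancellation of the $N_{12}$-term is the substantive step you are missing.
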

\begin{proof}
(cf. \cite[Lemma 3.3]{IR})
By Lemma \ref{N is surjective}(1), we have $N_{11}^{\alpha,\beta-\alpha}(u,v) = - \Tr(uv)$. Define a regular map $\pi \colon B_\alpha \to H$ by
\[
	\pi(x) = \left[ x, g_{\beta-\alpha} \lp \frac{-1}{2d} \rp \right].
\]
Now let $s \in k$. Then
\begin{align*}
	\pi \circ g_\alpha(s) &= \left[ \rho \circ X_\alpha \lp s \sqrt d \rp , \rho \circ X_{\beta-\alpha} \lp \frac{- \sqrt{d}}{2 d} \rp \right] = \rho  \left[ X_\alpha \lp s \sqrt d \rp, X_{\beta-\alpha} \lp \frac{-1}{2 \sqrt{d}} \rp \right] \\
	&= \rho \circ X_\beta \lp N_{11}^{\alpha,\beta-\alpha} \lp s \sqrt d, \frac{-1}{2 \sqrt{d}} \rp \rp  = \rho \circ X_\beta \lp - \Tr \lp \frac{- s}{2} \rp \rp  = \rho \left( X_\beta(s) \right) = f_\beta(s)
\end{align*}
This shows that $\pi \circ g_\alpha = f_\beta$.
Since $\pi$ is regular, it then follows that $\pi (B_{\alpha}) \subset A_{\beta}$. It remains to show that $\pi$ is invertible. First, using Lemma \ref{N is surjective}(2), we obtain
\begin{align*}
	N_{11}^{\beta, \alpha-\beta}(v,u) &= - uv \\
	N_{12}^{\beta, \alpha-\beta}(v,u) &= vu \ov u
\end{align*}
Now let $h = h_{2\alpha-\beta}(1/2)$ be the element introduced in (\ref{SteinbergElements}) and define
\[
	\nu:A_\beta \to B_\alpha, \qquad \nu(y) = \rho ( h ) \cdot \left[ y, g_{\alpha-\beta}(-1) \right] \cdot \left[ y, g_{\alpha-\beta}( 1) \right] \inv \cdot \rho (h) \inv.
\]
It is clear that $\nu$ is a regular map; we claim it is an inverse for $\pi$. Let $t \in k$. Using the commutator relation in Theorem \ref{root subgroup maps}(3), we have
\begin{align*}
	\left[ X_{\beta}(t), X_{\alpha-\beta} \lp -\sqrt d \rp \right] &= X_{\alpha} \Big( N_{11}^{\beta,\alpha-\beta}\lp t, - \sqrt d \rp \Big) \cdot X_{2\alpha-\beta} \Big( N_{12}^{\beta,\alpha-\beta} \lp t, - \sqrt d \rp \Big) \\
	&= X_\alpha \Big( t \sqrt{d} \Big) \cdot X_{2\alpha-\beta} \Big( -t d \Big).
\end{align*}
Thus
\begin{align*}
	&\left[ X_{\beta}(t),  X_{\alpha-\beta} \lp - \sqrt{d} \rp \right] \cdot \left[ X_{\beta}(t),  X_{\alpha-\beta}\lp \sqrt{d} \rp \right] \inv \\
	&=X_\alpha \Big( t \sqrt{d} \Big) \cdot \cancelto{}{X_{2\alpha-\beta} \Big(-  t d \Big)} \cdot \cancelto{}{X_{2\alpha-\beta} \Big( - t d \Big) \inv} \cdot X_\alpha \Big( - t \sqrt{d}  \Big) \inv = X_\alpha\lp 2 t \sqrt{d} \rp.
\end{align*}
We also have the relation
\[
	h \cdot X_\alpha(2v) \cdot h \inv = X_\alpha \lp v \rp
\]
for all $v \in L$. Putting everything together, we obtain
\begin{align*}
	\nu \circ f_\beta(t)  &= \rho  \lp h \cdot \left[  X_{\beta}(t) , X_{\alpha-\beta} \lp -\sqrt{d} \rp \right] \cdot \left[  X_{\beta}(t) , X_{\alpha-\beta}\lp \sqrt{d} \rp \right] \inv \cdot h \inv \rp \\
	&=  \rho  \lp h \cdot X_\alpha\lp 2 t \sqrt{d} \rp \cdot h \inv \rp = \rho \lp X_\alpha \lp t \sqrt{d} \rp \rp = g_\alpha(t).
\end{align*}
Thus, $\nu \circ \pi$ and $\pi \circ \nu$ are the respective identity maps on dense subsets of $A_\beta$ and $B_\alpha$. Since they are regular, it follows that they are the respective identities on the whole space, so $\nu$ is the inverse of $\pi$ as claimed.
\end{proof}

\begin{remark}
Although we fixed the roots $\alpha$ and $\beta$ in the preceding argument for the sake of concreteness, essentially the same calculations can be carried for any short root $\alpha$ and long root $\beta$ such that $\alpha - \beta$ is a root (with appropriate modifications to the definitions of $\pi$ and $\nu$, depending on the signs arising in computing $N_{11}$ and $N_{12}$).
\end{remark}

Next, we make the following observation, which will streamline the proof of Proposition \ref{existence of A} by allowing us to consider just a single root of each length. In the statement, we refer to the group schemes $V_{\alpha}$ introduced in Theorem \ref{root subgroup maps}. Given a $k$-algebra homomorphism $f \colon k \to A$, we denote by $V_{\alpha}(f) \colon V_{\alpha}(k) \to V_{\alpha}(A)$ the associated group homomorphism (note that if 
$\alpha$ is a long root, then $V_{\alpha}(f)$ can be identified with $f$, and if $\alpha$ is short, then $V_{\alpha}(f)$ is the map $a + b \sqrt{d} \mapsto f(a) + f(b) \sqrt{d}$ --- in particular, if $\alpha$ and $\beta$ have the same length, then the homomorphisms $V_\alpha(f)$ and $V_\beta(f)$ coincide).

\begin{lemma}
\label{Weyl group reduction}
Let $\alpha,\beta \in \Phi_k$ be roots of the same length. Then
\begin{enumerate}
	\item There exists $w \in E(k)$ such that for all $v \in V_\alpha(k)$, we have $w \cdot X_\alpha(v) \cdot w \inv = X_\beta( \varphi v)$, where $\varphi v = \pm v_{\pm 1}$.
	\item Let $f \colon k \to A$ be a $k$-algebra homomorphism. Suppose there exists a regular map $\psi_\alpha \colon V_\alpha(A) \to H$ such that $\psi_\alpha \circ V_\alpha(f) = \rho \circ X_\alpha$. Let $w, \varphi$ be as in (1), and define $\psi_\beta \colon V_{\beta}(A) \to H$ by
	\[
		\psi_\beta(v) = \rho(w) \cdot \psi_\alpha(\varphi v) \cdot \rho(w) \inv
	\]
	Then $\psi_\beta$ is regular and satisfes $\psi_\beta \circ V_\beta(f) = \rho \circ X_\beta$.
\end{enumerate}
\end{lemma}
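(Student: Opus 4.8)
The plan is to prove both parts by elementary manipulations with the Steinberg-type relations already available to us. For part (1), I would invoke relation \eqref{SteinbergElements1}: taking a chain of Weyl group elements $w_{\gamma}(1)$ for suitable roots $\gamma$, each conjugation sends a root subgroup $X_{\alpha}$ to $X_{w_{\gamma}\alpha}$ with the argument modified by a map of the form $v \mapsto \pm v_{\pm 1}$. Since $\alpha$ and $\beta$ have the same length and the Weyl group of $\Phi_k$ (type $\mathsf{C}_n$) acts transitively on roots of a given length, there is a product $w = w_{\gamma_r}(1) \cdots w_{\gamma_1}(1) \in E(k)$ with $w \cdot X_{\alpha}(v) \cdot w^{-1} = X_{\beta}(\varphi v)$, where $\varphi$ is the composite of the individual sign/conjugation maps; each such map is an involution on $V_{\alpha}(k)$, and any composite of maps of the form $v \mapsto \pm v_{\pm 1}$ is again of this form (the $\pm$ signs multiply and the conjugations either cancel in pairs or not), so $\varphi v = \pm v_{\pm 1}$, hence $\varphi^2 = \mathrm{Id}$. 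Note that when $\alpha$ is long, $V_{\alpha}(k) = k$ and the conjugation $\tau$ acts trivially, so $\varphi v = \pm v$; when $\alpha$ is short, $\varphi$ may genuinely involve $\tau$.

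For part (2), the definition $\psi_{\beta}(v) = \rho(w) \cdot \psi_{\alpha}(\varphi v) \cdot \rho(w)^{-1}$ is visibly a composition of the regular map $\psi_{\alpha}$ with the linear (hence regular) map $\varphi$ on $V_{\alpha}(A)$ — here I would note that $\varphi$ is defined over $k$ by formulas of the type $v \mapsto \pm v_{\pm 1}$, and these make sense with $A$-coefficients using the $\tau$-action on $A_L = A \otimes_k L$, so $\varphi \colon V_{\beta}(A) \to V_{\alpha}(A)$ is a morphism of $K$-varieties — followed by conjugation by the fixed element $\rho(w) \in H$, which is an inner automorphism of $H$ and thus regular. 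Hence $\psi_{\beta}$ is regular. To check the compatibility $\psi_{\beta} \circ V_{\beta}(f) = \rho \circ X_{\beta}$, I would compute, for $v \in V_{\beta}(k)$,
\[
\psi_{\beta}(V_{\beta}(f)(v)) = \rho(w) \cdot \psi_{\alpha}\bigl(\varphi(V_{\beta}(f)(v))\bigr) \cdot \rho(w)^{-1}.
\]
The key point is that $\varphi$ commutes with the base-change maps: $\varphi \circ V_{\beta}(f) = V_{\alpha}(f) \circ \varphi$, because $\varphi$ is given by the same universal formula over $k$ and over $A$ and $f$ is a $k$-algebra homomorphism compatible with $\tau$. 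Thus the right-hand side equals $\rho(w) \cdot \psi_{\alpha}(V_{\alpha}(f)(\varphi v)) \cdot \rho(w)^{-1} = \rho(w) \cdot \rho(X_{\alpha}(\varphi v)) \cdot \rho(w)^{-1}$, using the hypothesis $\psi_{\alpha} \circ V_{\alpha}(f) = \rho \circ X_{\alpha}$. Applying $\rho$ to the identity from part (1), namely $w \cdot X_{\alpha}(\varphi v) \cdot w^{-1} = X_{\beta}(\varphi^2 v) = X_{\beta}(v)$ (using $\varphi^2 = \mathrm{Id}$ — one must be slightly careful and apply (1) with the roles consistent, replacing $v$ by $\varphi v$), gives $\rho(w) \cdot \rho(X_{\alpha}(\varphi v)) \cdot \rho(w)^{-1} = \rho(X_{\beta}(v))$, as desired.

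I expect the main obstacle to be purely bookkeeping: tracking the signs and the conjugations in $\varphi$ carefully enough to be sure that $\varphi^2 = \mathrm{Id}$ and that part (1) is applied in the correct direction (i.e. with $\varphi v$ in place of $v$) so that the $\varphi$'s cancel in part (2). There is no deep input here beyond the relation \eqref{SteinbergElements1} and the transitivity of the Weyl group on roots of fixed length; the content is that the construction of $\psi_{\beta}$ from $\psi_{\alpha}$ is forced, and regularity is inherited from regularity of $\psi_{\alpha}$, of the linear map $\varphi$, and of inner automorphisms of $H$.
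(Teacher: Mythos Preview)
Your proposal is correct and matches the paper's proof essentially step for step: part~(1) via transitivity of the Weyl group on roots of a given length together with relation~\eqref{SteinbergElements1}, and part~(2) via the direct computation using $\varphi \circ V_{\beta}(f) = V_{\alpha}(f) \circ \varphi$, the hypothesis on $\psi_{\alpha}$, and $\varphi^2 = \mathrm{Id}$. The only additions you make are the explicit remarks on why $\psi_{\beta}$ is regular (linearity of $\varphi$, regularity of inner automorphisms), which the paper leaves implicit.
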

\begin{proof}
(1) It is well-known that the Weyl group $W$ acts transitively on roots of the same length, so there exists $\wt w \in W$ such that $\wt w \alpha = \beta$. Write $\wt w$ as a product of simple reflections, $\wt w = w_{\gamma_1} \cdots w_{\gamma_n}$ for roots $\gamma_i \in \Phi_k$. Using the relation (\ref{SteinbergElements1}), we have
\[
	w_{\gamma_i}(1) \cdot X_\alpha(v) \cdot w_{\gamma_i}(1) \inv = X_{w_{\gamma_i} \alpha}(\varphi_i v)
\]
where $\varphi_i v = \pm v_{\pm 1}$. Let $w = w_{\gamma_1}(1) \cdots w_{\gamma_n}(1)$ and $\varphi = \varphi_1 \cdots \varphi_n$. Then repeatedly applying the above relation yields
\[
	w \cdot X_\alpha(v) \cdot w \inv = X_{w_{\gamma_1} \cdots w_{\gamma_n} \alpha} ( \varphi_1 \cdots \varphi_n v) = X_{\wt w \alpha} ( \varphi v) = X_\beta(\varphi v)
\]
Since each $\varphi_i$ is a composition of negation and conjugation, $\varphi v = \pm v_{\pm 1}$.

\vskip2mm

\noi (2) This is proved by a direct calculation using the result of part (1), the fact that $\varphi^2 = \Id$, and the assumption that $\psi_\alpha \circ V_\alpha(f) = \rho \circ X_\alpha$. Namely, let $v \in V_\beta(A)$. Then
\begin{align*}
	\psi_{\beta} \circ V_\beta(f)(v) &= \rho(w) \cdot \psi_{\alpha}( \varphi \circ V_\beta(f)(v) ) \cdot \rho(w) \inv = \rho(w) \cdot \psi_{\alpha} \circ V_\alpha(f)( \varphi v) \cdot \rho(w) \inv = \\
	&= \rho(w) \cdot \rho \circ X_{\alpha}( \varphi v) \cdot \rho(w) \inv = \rho \Big( w \cdot X_{\alpha}(\varphi v) \cdot w \inv \Big) = \rho \Big( X_{\beta}(\varphi^2 v) \Big) = \rho \circ X_{\beta}(v)
\end{align*}
\end{proof}

\noi We now come to one of the main statements of this section.

\begin{proposition}
\label{existence of A}
Let $G = \SU_{2n}(L,h)$ and let $\rho:G(k) \to \GL_m(K)$ be an abstract representation, with $K$ an algebraically closed field of characteristic. Set $H = \ov{\rho(G(k))}$. There exists a finite-dimensional $K$-algebra $A$, a ring homomorphism $f \colon k \to A$ with Zariski-dense image, and for each $\alpha \in \Phi_k$, a regular map $\psi_\alpha \colon V_{\alpha}(A) \to H$ such that $\psi_\alpha \circ V_\alpha(f) = \rho \circ X_\alpha$.
\end{proposition}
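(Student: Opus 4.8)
The plan is to assemble the maps $\psi_\alpha$ one root length at a time, bootstrapping from the data already extracted from $\rho_0 = \rho|_{G_0(k)}$ via \cite[Theorem 3.1]{IR}. Recall that this construction, together with Remark \ref{properties of A}, supplies the finite-dimensional $K$-algebra $A$, the ring homomorphism $f \colon k \to A$ with Zariski-dense image, and, for every $\alpha \in \Phi_k$, an (injective) regular map $\psi_\alpha^1 \colon A \to H$ with $\psi_\alpha^1 \circ f = \rho_0 \circ X_\alpha$ on $k = V_\alpha(k)$; here $\psi_\alpha^1$ factors through the closure $A_\alpha$ of the image of the ``first component'' $X_\alpha(k)$ of the root subgroup.

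The long roots require nothing new. If $\alpha \in \Phi_k$ is long, then $V_\alpha(k) = k$, $V_\alpha(A) = A$, and $V_\alpha(f)$ is identified with $f$; moreover $X_\alpha(k) \subset G_0(k)$ by the description of $G_0$ in \S\ref{S-Elementary}, so $\rho_0 \circ X_\alpha = \rho \circ X_\alpha$. Hence $\psi_\alpha := \psi_\alpha^1$ already satisfies $\psi_\alpha \circ V_\alpha(f) = \rho \circ X_\alpha$.

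Next I would handle a single short root: take $\alpha_0 = \alpha_1 - \alpha_3$ and $\beta = -2\alpha_3$, and invoke Lemma \ref{compatibility isomorphism}, which produces an isomorphism of varieties $\pi \colon B_{\alpha_0} \xrightarrow{\sim} A_\beta$ with $\pi \circ g_{\alpha_0} = f_\beta$, where $B_{\alpha_0} = \overline{\rho(X_{\alpha_0}(k\sqrt d))} \subset H$. Composing with the algebraic-ring isomorphism $\pi_\beta \colon A \xrightarrow{\sim} A_\beta$ (which satisfies $\pi_\beta \circ f = f_\beta$) and with the inclusion $B_{\alpha_0} \hookrightarrow H$, one obtains a regular map $\chi_{\alpha_0} \colon A \to H$ with $\chi_{\alpha_0} \circ f = g_{\alpha_0}$, i.e.\ $\chi_{\alpha_0}(f(u)) = \rho(X_{\alpha_0}(u\sqrt d))$ for $u \in k$. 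Since $V_{\alpha_0}(A) = A \oplus A\sqrt d$ and, by additivity of root subgroups (Theorem \ref{root subgroup maps}(1)), $X_{\alpha_0}(a + b\sqrt d) = X_{\alpha_0}(a) \cdot X_{\alpha_0}(b\sqrt d)$, one sets
\[
\psi_{\alpha_0}(a + b\sqrt d) = \psi_{\alpha_0}^1(a) \cdot \chi_{\alpha_0}(b) \qquad (a,b \in A).
\]
This is well-defined since $a$ and $b$ are uniquely determined by $a + b\sqrt d$, and it is regular since both factors land in the algebraic group $H$ and multiplication on $H$ is a morphism; a direct check against $\psi_{\alpha_0}^1 \circ f = \rho \circ X_{\alpha_0}|_k$ and $\chi_{\alpha_0} \circ f(u) = \rho(X_{\alpha_0}(u\sqrt d))$ gives $\psi_{\alpha_0} \circ V_{\alpha_0}(f) = \rho \circ X_{\alpha_0}$. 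Finally, since the Weyl group acts transitively on roots of a given length and $G(k) = E(k)$, Lemma \ref{Weyl group reduction}(2) propagates $\psi_{\alpha_0}$ to a regular map $\psi_\gamma \colon V_\gamma(A) \to H$ with $\psi_\gamma \circ V_\gamma(f) = \rho \circ X_\gamma$ for every short root $\gamma$, which completes the construction.

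The genuinely new ingredient is Lemma \ref{compatibility isomorphism} (already proved): it is what makes the ``second coordinate'' $k\sqrt d$ of a short root subgroup --- a priori invisible to the split subgroup $G_0$ from which $A$ was built --- accessible, by identifying the closure $B_{\alpha_0}$ with the algebraic ring $A$ through a commutator identity with a long root subgroup. Once this is in hand the remaining steps are formal bookkeeping; the only points needing attention are the regularity and well-definedness of the product map defining $\psi_{\alpha_0}$ (handled above) and the fact that the Weyl element $w$ of Lemma \ref{Weyl group reduction}(1) lies in $G(k) = E(k)$, so that $\rho(w)$ makes sense.
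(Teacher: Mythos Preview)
Your proposal is correct and follows essentially the same approach as the paper: build $A$ and the maps $\psi_\alpha^1$ from $\rho|_{G_0(k)}$ via \cite[Theorem~3.1]{IR}, take $\psi_\alpha = \psi_\alpha^1$ for long roots, construct $\psi_{\alpha_0}$ for the specific short root $\alpha_0 = \alpha_1 - \alpha_3$ as the product of $\psi_{\alpha_0}^1$ with a second-coordinate map obtained from Lemma~\ref{compatibility isomorphism}, and then transport to all short roots by Lemma~\ref{Weyl group reduction}. The only cosmetic difference is that the paper invokes the Weyl-group reduction at the outset rather than at the end, and names your $\chi_{\alpha_0}$ as $\psi_\alpha^2$.
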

\begin{proof}
We take $A$ and $f \colon k \to A$ to be the algebraic ring and ring homomorphism constructed from the restriction $\rho \vert_{G_0(k)}$ using \cite[Theorem 3.1]{IR}, as described at the beginning of this section. Recall that
for each $\alpha \in \Phi_k$, there is a regular map $\psi_\alpha^1:A_\alpha \to H$, and an isomorphism $\pi_\alpha \colon A \to A_\alpha$ such that $\pi_\alpha \circ f = f_\alpha$ and $\psi_\alpha^1 \circ V_\alpha(f)|_k = \rho \circ X_\alpha|_k$.

First, we note that by Lemma \ref{Weyl group reduction}, it suffices to construct $\psi_\alpha \colon V_\alpha(A) \to H$ satisfying $\psi_\alpha \circ V_\alpha(f) = \rho \circ X_\alpha$ for a single root of each length. If $\alpha$ is a long root, then, since the corresponding root space is 1-dimensional, we can simply set $\psi_\alpha = \psi_\alpha^1$.


Now let us consider the short root $\alpha = \alpha_1 - \alpha_3$. Then, taking $\beta = -2 \alpha_3$,
Lemma \ref{compatibility isomorphism} yields
an isomorphism of algebraic varieties $\pi \colon B_\alpha \to A_\beta$ such that $\pi \inv \circ f_{\beta} = g_\alpha$. Define
\[
	\psi_\alpha^2 \colon A \to H, \qquad \psi_\alpha^2 = \iota_B \circ \pi \inv \circ \pi_{\beta},
\]
where $\pi_{\beta} \colon A \to A_{\beta}$ is the previously fixed isomorphism satisfying $f_{\beta} = \pi_{\beta} \circ f$, and $\iota_B \colon B_\alpha \into H$ is the natural inclusion. Using the identification $V_{\alpha}(A) = A_L \simeq A^2$, $v = v_1 + v_2 \sqrt{d} \mapsto (v_1, v_2)$, we define
\[
	\psi_\alpha:V_\alpha(A) \to H, \qquad v= (v_1, v_2) \mapsto \psi_\alpha^1(v_1) \cdot \psi_\alpha^2(v_2).
\]
Note that for any $u \in k$, we have
\begin{align*}
	(\psi_\alpha^2 \circ f) \lp u \rp &=  (\iota_B \circ \pi \inv \circ \pi_{\beta} \circ f)  \lp u \rp = (\iota_B \circ \pi \inv \circ f_{\beta}) (u) = (\iota_B \circ g_\alpha)(u) = (\rho \circ X_{\alpha}) \lp u \sqrt{d} \rp
\end{align*}
Hence, for $v = (v_1, v_2) \in V_\alpha(k) = L = k \oplus k \sqrt{d}$, we have
\begin{align*}
	\Big( \psi_\alpha \circ V_\alpha(f) \Big) (v) &= \psi_\alpha \Big( f(v_1), f(v_2) \Big) = \Big( \psi_\alpha^1 \circ f(v_1) \Big) \cdot \Big( \psi_\alpha^2 \circ f(v_2) \Big) = \\
	&= \Big( \rho \circ X_\alpha(v_1) \Big) \cdot \Big( \rho \circ X_\alpha \lp v_1 \sqrt{d} \rp \Big) = \\
	&= \rho \circ X_\alpha \lp v_1 + v_2 \sqrt{d} \rp = (\rho \circ X_\alpha)(v)
\end{align*}
Thus, $\psi_\alpha \circ V_\alpha(f) = \rho \circ X_\alpha$, as needed.
\end{proof}

\noi To conclude this section, we now show the representation $\rho$ can be lifted to a representation $\wt \sig$ of the Steinberg group $\wt G(A)$. The precise statement is as follows.

\begin{proposition}
\label{existence of sigma tilde}
Let $\rho \colon G(k) \to \GL_m(K)$ be an abstract representation, with $K$ an algebraically closed field of characteristic 0. Set $H = \ov{\rho(G(k))}$, and let $A,f, \psi_\alpha$ be as in Proposition \ref{existence of A}. Let $\wt F:\wt G(k) \to \wt G(A)$ be the homomorphism induced by $f$ and $\pi_k \colon \tilde{G}(k) \to G(k)$ be the natural map. Then there exists a group homomorphism $\wt \sig:\wt G(A) \to H$ such that $\wt \sig \circ \wt F = \rho \circ \pi_k$ and $\wt \sig \circ \wt X_\alpha = \psi_\alpha$ for all $\alpha \in \Phi_k$.
\end{proposition}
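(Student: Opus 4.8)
The plan is to construct $\wt{\sig}$ directly from the presentation of $\wt G(A)$: define it on generators by $\wt{\sig}(\wt X_\alpha(v)) = \psi_\alpha(v)$ for $\alpha \in \Phi_k$ and $v \in V_\alpha(A)$, and then verify that this assignment respects the defining relations (R1) and (R2). Since $\wt G(A)$ is by definition the group with these generators subject to these relations, this immediately yields a homomorphism $\wt{\sig} \colon \wt G(A) \to H$ --- the image lands in $H$ because each $\psi_\alpha$ takes values in $H$ and $\wt G(A)$ is generated by the $\wt X_\alpha(v)$. The two asserted identities are then formal: $\wt{\sig} \circ \wt X_\alpha = \psi_\alpha$ holds by construction, and for $\wt{\sig} \circ \wt F = \rho \circ \pi_k$ one checks equality of these two homomorphisms $\wt G(k) \to H$ on the generators $\wt X_\alpha(a)$, $a \in V_\alpha(k)$: since $\wt F(\wt X_\alpha(a)) = \wt X_\alpha(V_\alpha(f)(a))$, we get $\wt{\sig}(\wt F(\wt X_\alpha(a))) = \psi_\alpha(V_\alpha(f)(a)) = \rho(X_\alpha(a)) = \rho(\pi_k(\wt X_\alpha(a)))$, using the defining property $\psi_\alpha \circ V_\alpha(f) = \rho \circ X_\alpha$ from Proposition \ref{existence of A}.

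The heart of the argument is the verification of (R1) and (R2), which I would carry out by Zariski density. Because $f \colon k \to A$ has Zariski-dense image and $V_\alpha$ is a vector $k$-group (so that, as a $K$-variety, $V_\alpha(A)$ is an affine space, in particular irreducible), the image of $V_\alpha(f) \colon V_\alpha(k) \to V_\alpha(A)$ is Zariski-dense, and hence so is $V_\alpha(f)(V_\alpha(k)) \times V_\beta(f)(V_\beta(k))$ inside $V_\alpha(A) \times V_\beta(A)$. For (R1), both maps $(v,w) \mapsto \psi_\alpha(v+w)$ and $(v,w) \mapsto \psi_\alpha(v)\psi_\alpha(w)$ from $V_\alpha(A) \times V_\alpha(A)$ to $H$ are regular, and they agree on the dense subset above by
\[
\psi_\alpha(V_\alpha(f)(a) + V_\alpha(f)(b)) = \psi_\alpha(V_\alpha(f)(a+b)) = \rho(X_\alpha(a+b)) = \rho(X_\alpha(a))\,\rho(X_\alpha(b)),
\]
using Theorem \ref{root subgroup maps}(1); hence they coincide everywhere. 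For (R2) with $\alpha \neq \pm\beta$, the assignment sends the right-hand side of the relation to $\prod_{i,j} \psi_{i\alpha+j\beta}(N_{ij}^{\alpha\beta}(u,v))$, which is a well-defined regular map $V_\alpha(A) \times V_\beta(A) \to H$: by Remark \ref{G=E}(a) the product has at most two factors and, when there are two, they commute, so the order is immaterial, and each factor is regular since the $N_{ij}^{\alpha\beta}$ are polynomial and the $\psi_{i\alpha+j\beta}$ are regular. The left-hand side $(u,v) \mapsto [\psi_\alpha(u),\psi_\beta(v)]$ is also regular. These two regular maps agree on the dense subset: for $a \in V_\alpha(k)$ and $b \in V_\beta(k)$ one uses functoriality of the polynomial maps $N_{ij}^{\alpha\beta}$ in the ring argument (giving $N_{ij}^{\alpha\beta}(V_\alpha(f)(a),V_\beta(f)(b)) = V_{i\alpha+j\beta}(f)(N_{ij}^{\alpha\beta}(a,b))$), the identities $\psi_\gamma \circ V_\gamma(f) = \rho \circ X_\gamma$, and the Chevalley commutator formula of Theorem \ref{root subgroup maps}(3) in $G(k)$; hence they coincide everywhere, which is (R2).

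The argument is essentially formal once the data of Proposition \ref{existence of A} are available, so I do not expect a serious obstacle. The one point requiring genuine care is the transfer of the Chevalley relations from $G(k)$ to $\wt G(A)$ by density, and for this it is essential both that $V_\alpha(f)$ has Zariski-dense image (which follows from density of $f(k)$ in $A$ together with the explicit description of $V_\alpha(f)$) and that the right-hand side of (R2) define an unambiguous regular map into $H$ --- precisely the content of Remark \ref{G=E}(a) that the (at most two) terms of that product commute.
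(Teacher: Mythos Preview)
Your proposal is correct and follows essentially the same approach as the paper: define $\wt\sigma$ on generators by $\wt\sigma(\wt X_\alpha(v)) = \psi_\alpha(v)$, verify (R1) and (R2) by comparing two regular maps that agree on the Zariski-dense subset coming from $V_\alpha(f)$, and deduce the two compatibility identities formally. If anything, your write-up is slightly more explicit than the paper's about why the right-hand side of (R2) is an unambiguous regular map (via Remark~\ref{G=E}(a)) and why $V_\alpha(f)$ has dense image.
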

\begin{proof}
In order for the relation $\wt \sig \circ \wt X_\alpha = \psi_\alpha$ to hold, we must define $\wt \sig$ on the generators of $\wt G(A)$ by
\[
	\wt \sig \Big( \wt X_\alpha(v) \Big) := \psi_\alpha(v).
\]
To show that $\wt \sig$ is well defined, we need to verify that the relations (R1) and (R2) hold, replacing $\wt X_\alpha$ with $\psi_\alpha$. For this, we imitate the proof of \cite[Proposition 4.2]{IR}, starting with (R1). To make the notation less burdensome, let us set $\tilde {f} = V_{\alpha}(f).$  Let $a,b \in \tilde{f}(V_\alpha(k))$, and choose $v,w \in V_\alpha(k)$ such that $\tilde{f}(v) = a$ and $\tilde{f}(w) = b$. 
Then
\begin{align*}
	\psi_\alpha(a) \cdot \psi_\alpha(b) &= \lp \psi_\alpha \circ \tilde{f}\rp (v) \cdot \lp \psi_\alpha \circ \tilde{f} \rp (w) = (\rho \circ X_\alpha)(v) \cdot (\rho \circ  X_\alpha)(w) = \rho \Big( X_\alpha(v) \cdot X_\alpha(w) \Big) = \\
	&= (\rho \circ X_\alpha)( v+w)  = \lp \psi_\alpha \circ \tilde{f} \rp (v+w)  = \psi_\alpha \lp \tilde{f}( v) + \tilde{f}( w) \rp = \psi_\alpha ( a+b).
\end{align*}
Thus, we have two regular maps $V_\alpha(A) \times V_\alpha(A) \to H$ given by
\[
	(a,b) \mapsto \psi_\alpha(a) \cdot \psi_\alpha(b) \ \ \ \text{and} \ \ \  (a,b) \mapsto \psi_\alpha(a+b)
\]
that agree on the Zariski-dense subset $\tilde{f}(V_\alpha(k)) \subset V_{\alpha}(A).$ So, they must coincide on all of $V_\alpha(A)$. This verifies (R1).
By a similar calculation, for any $\alpha,\beta \in \Phi_k$ with $\alpha \neq \pm \beta$, we have two regular maps $V_\alpha(A) \times V_\beta(A) \to H$ given by
\[
	(a,b) \mapsto \Big[ \psi_\alpha(a), \psi_\beta(b) \Big] \ \ \ \text{and} \ \ \  (a,b) \mapsto \prod_{i,j >0} \psi_{i\alpha+j\beta} \Big( N_{ij}^{\alpha \beta}(a,b) \Big)
\]
that agree on the Zariski-dense subset $\tilde{f}(V_\alpha(k))$, and hence on all of $V_\alpha(A)$. Thus, (R2) holds as well.
This shows that $\wt \sig$ is well defined, and, by construction, satisfies $\wt \sig \circ \wt X_\alpha = \psi_\alpha$ for all $\alpha \in \Phi_k$. Finally, for any $\alpha \in \Phi_k$ and $v \in V_\alpha(k)$, we have
\[
	(\wt \sig \circ \wt F) \Big( \wt X_\alpha(v) \Big) = \wt \sig \Big( \wt X_\alpha \lp \tilde{f}(v) \rp \Big) = \psi_\alpha \lp \tilde{f}(v) \rp = (\rho \circ X_\alpha)(v) = (\rho \circ \pi_k) \Big( \wt X_\alpha(v) \Big),
\]
from which it follows that $\wt \sig \circ \wt F = \rho \circ \pi_k$.
\end{proof}

\section{Rationality and conclusion of the proof}\label{S-Rationality}
\label{rationality}

We retain the notations of the previous section. Namely, we let $G = \SU_{2n}(L,h)$, consider an abstract representation $\rho \colon G(k) \to \GL_m(K)$, with $K$ an algebraically closed field of characteristic 0, and set $H =\ov{\rho(G(k))}$. Recall that by Proposition \ref{existence of A}, one can associate to $\rho$ an algebraic ring $A$ together with a ring homomorphism $f \colon k \to A$ with Zariski-dense image. Moreover, in Proposition \ref{existence of sigma tilde}, we constructed a group homomorphism $\wt \sig \colon \wt G(A) \to H$ that lifts $\rho$ to a representation of the Steinberg group $\wt G(A).$ More precisely, the diagrams formed by the solid arrows below commute
\begin{equation}
\label{diagram}
	\begin{tikzcd}
		\wt G(k) \arrow{r}{\wt F} \arrow{d}[swap]{\pi_k} & \wt G(A) \arrow[bend left]{ddr}{\wt \sig} \arrow{d}{\pi_A} \\
		G(k) \arrow{r}{F} \arrow[bend right]{drr}[swap]{\rho} & G(A) \arrow[dotted]{dr}{\sig} \\
		&& H
	\end{tikzcd}
\end{equation}
(here $F$ and $\wt F$ denote the group homomorphisms induced by $f$). To complete the proof of Theorem \ref{T-MainTheorem}, it remains to show the existence, under the assumptions of the theorem, of a morphism of algebraic groups $\sigma \colon G(A) \to H$ as indicated in the diagram.

In this analysis, we are implicitly using the functor of restriction of scalars to view $G(A)$ as an algebraic $K$-group. Namely, denoting by $G_A$ the base change of $G$ from $k$ to $A$, 
the definition of the functor of restriction of scalars gives a natural isomorphism $r_{A/K} \colon G_A(A) \to R_{A/K} (G_A)(K)$. Since $A$ is a finite-dimensional $K$-algebra, $R_{A/K} (G_A)(K)$ is an affine algebraic $K$-group, and the isomorphism allows us to endow $G(A) = G_A(A)$ with the structure of an algebraic $K$-group.

Before addressing the algebraicity of $\sig$, we first note the following statement, which establishes some important properties of the group $H$.



\begin{lemma}
\label{H is connected}
The homomorphism $\wt \sig:\wt G(A) \to H$ is surjective and the algebraic group $H$ is connected and perfect.
\end{lemma}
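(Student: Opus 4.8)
The plan is to extract all three assertions from facts already in hand. For surjectivity, recall that $\wt{\sigma} \circ \wt{X}_\alpha = \psi_\alpha$ for every $\alpha \in \Phi_k$, so the image of $\wt\sigma$ contains all the subsets $\psi_\alpha(V_\alpha(A)) = \wt\sigma(\wt{X}_\alpha(V_\alpha(A)))$. On the other hand, by Theorem \ref{root subgroup maps}(4) the elementary subgroup $E(A)$ is generated by the elements $X_\alpha(v)$, $\alpha \in \Phi_k$, $v \in V_\alpha(A)$, and by Remark \ref{properties of A} the algebra $A$ is a finite-dimensional (in particular semilocal) $K$-algebra, so Remark \ref{G=E}(b) gives $G(A) = E(A)$. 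Since $\pi_A \colon \wt{G}(A) \to E(A) = G(A)$ sends $\wt{X}_\alpha(v) \mapsto X_\alpha(v)$, the group $\wt{G}(A)$ is generated by the $\wt{X}_\alpha(v)$, hence $\im \wt\sigma$ is generated by the $\psi_\alpha(V_\alpha(A))$. It therefore remains only to see that this image is all of $H$; but $\im \wt\sigma \supseteq \im(\wt\sigma \circ \wt F) = \im(\rho \circ \pi_k) = \rho(G(k))$ by Proposition \ref{existence of sigma tilde} and the surjectivity of $\pi_k$, and $H = \ov{\rho(G(k))}$. So it suffices to show $\im \wt\sigma$ is already Zariski-closed, which will follow once we know it is a subgroup of $H$ that is a union of finitely many translates of a constructible set — or, more cleanly, once we establish connectedness (below), since then $\im\wt\sigma$ is generated by the irreducible constructible sets $\psi_\alpha(V_\alpha(A))$ each containing the identity, and a standard fact (e.g.\ \cite[Prop.\ 2.2]{BorelAG}) says the subgroup generated by a family of irreducible constructible subsets through $1$ is a closed connected subgroup.

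For connectedness, the key point is that each $V_\alpha(A)$ is an irreducible variety (it is an affine space over $K$, since $A$ is a finite-dimensional $K$-algebra and $V_\alpha(A)$ is either $A$ or $A_L = A \otimes_k L$), and $\psi_\alpha$ is a regular map, so $\psi_\alpha(V_\alpha(A))$ is an irreducible constructible subset of $H$ containing $1 = \psi_\alpha(0)$. By the cited structure theorem, the abstract subgroup they generate is a closed connected subgroup of $H$; by the surjectivity argument above this subgroup is all of $H$, so $H$ is connected. This simultaneously completes the proof of surjectivity of $\wt\sigma$.

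Finally, perfectness of $H$ is immediate from Lemma \ref{product of other generators}: that lemma asserts $\wt{G}(A)$ is a perfect group, and $H = \wt\sigma(\wt{G}(A))$ is a quotient of it, hence perfect as well. (Alternatively, $H = \ov{\rho(G(k))} = \ov{\rho(E(k))}$ with $E(k)$ perfect, so $\rho(E(k))$ is perfect and its closure is perfect since the commutator map is a morphism.)

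I do not expect any genuine obstacle here: the only mildly delicate point is making sure the "subgroup generated by irreducible constructible sets through $1$ is closed and connected" lemma is invoked correctly — in particular that finitely many root subgroups suffice, which is exactly Theorem \ref{root subgroup maps}(4) combined with $G(A) = E(A)$ from Remark \ref{G=E}(b). Everything else is bookkeeping with the commutative diagram \eqref{diagram}.
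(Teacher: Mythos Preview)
Your proposal is correct and follows essentially the same approach as the paper: both arguments apply \cite[Proposition 2.2]{BorelAG} to the irreducible (connected) sets $\psi_\alpha(V_\alpha(A))$ through the identity to conclude that the subgroup they generate, namely $\wt\sigma(\wt G(A))$, is closed and connected, then observe this subgroup contains the dense set $\rho(G(k))$ and hence equals $H$; perfectness then follows from Lemma \ref{product of other generators}. Your concern about needing only finitely many root subgroups is unnecessary, since Borel's Proposition 2.2 applies to an arbitrary family of irreducible subsets through $1$ (and in any case $\Phi_k$ is finite).
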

\begin{proof}
Let $\calH \subset H$ be the (abstract) subgroup generated by the elements $\psi_\alpha(v) =  \wt \sig \circ \wt X_\alpha (v)$ for all $\alpha \in \Phi_k$ and $v \in V_\alpha(A)$, where $\psi_{\alpha} \colon V_{\alpha}(A) \to H$ are the regular maps introduced in Proposition \ref{existence of A}. Since, by definition, the $\wt X_\alpha(v)$ generate $\wt G(A)$, their images generate $\wt \sig \lp \wt G(A) \rp$, so $\calH = \wt \sig(\wt G(A))$. Now, $A$ is connected by Remark \ref{properties of A}, so $V_{\alpha}(A)$ is connected, and hence
$\psi_\alpha(V_\alpha(A))$ is connected. Thus, it follows from \cite[Proposition 2.2]{BorelAG} that
$\calH$ is Zariski-closed and connected, so $\calH \subset H^\circ$.
On the other hand, $\calH$ contains $\rho(E(k))$, which is Zariski-dense in $H$. So, $\calH$ is Zariski-dense in $H$, and since $\calH$ is closed, we see that $\calH = H$. This shows that $\calH = H = H^\circ$. Furthermore, by Lemma \ref{product of other generators}, $\wt G(A)$ is equal to its commutator subgroup, so the same is true for $\calH = \wt \sig(\wt G(A))$.
\end{proof}

In the remainder of the this section, we will complete the proof of Theorem \ref{T-MainTheorem} using a strategy inspired by that of \cite[\S\S 5,6]{IR}. Namely, let $Z(H)$ be the center of $H$, set $\ov H = H/ Z(H)$, and denote by $\nu \colon H \to \ov H$ the corresponding quotient map. We first show that $\wt \sigma$ gives rise to a group homomorphism $\ov \sig \colon G(A) \to \ov H$ satisfying $\ov \sigma \circ \pi_A = \nu \circ \wt \sigma$, and verify that $\ov \sig$ is in fact a morphism of algebraic groups. Then, using the assumption that the unipotent radical $U = R_u(H)$ is commutative (together with our standing hypothesis that ${\rm char}~K = 0$), we lift $\ov \sig$ to the required morphism of algebraic groups $\sig \colon G(A) \to H.$

\begin{proposition}
\label{existence of sigma bar}
There exists a group homomorphism $\ov \sig \colon G(A) \to \ov H$ such that $\ov \sig \circ \pi_A = \nu \circ \wt \sig$, where $\pi_A \colon \wt G(A) \to G(A)$ is the canonical map.
\end{proposition}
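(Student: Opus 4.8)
The plan is to obtain $\ov\sig$ as the factorization of $\nu\circ\wt\sig$ through the canonical projection $\pi_A\colon\wt G(A)\to G(A)$. Since $\pi_A$ is surjective, such a factorization exists — and is then automatically a group homomorphism — precisely when $\ker\pi_A\subseteq\ker(\nu\circ\wt\sig)$, i.e. when $\wt\sig(\ker\pi_A)\subseteq Z(H)$. So the entire argument reduces to checking that $\wt\sig$ carries $\ker\pi_A$ into the center of $H$.

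First I would invoke Remark \ref{properties of A}: the algebraic ring $A$ is a finite-dimensional $K$-algebra, hence (via $f\colon k\to A$) a finite product of local $k$-algebras, so it satisfies the hypotheses of Proposition \ref{central kernel}. That proposition then gives that $\ker\pi_A$ is a central subgroup of $\wt G(A)$. Next, by Lemma \ref{H is connected} the homomorphism $\wt\sig\colon\wt G(A)\to H$ is surjective, and the image of a central subgroup under a surjective homomorphism is central; hence $\wt\sig(\ker\pi_A)\subseteq Z(H)$.

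Consequently $\nu\circ\wt\sig$ annihilates $\ker\pi_A$, and since $\pi_A$ is surjective with $\ker\pi_A\subseteq\ker(\nu\circ\wt\sig)$, there is a unique map $\ov\sig\colon G(A)\to\ov H$ with $\ov\sig\circ\pi_A=\nu\circ\wt\sig$; explicitly, $\ov\sig(g)=\nu\bigl(\wt\sig(\wt g)\bigr)$ for any preimage $\wt g\in\wt G(A)$ of $g$ under $\pi_A$, the choice being irrelevant by the previous paragraph. That $\ov\sig$ is a group homomorphism follows at once from the corresponding property of $\nu\circ\wt\sig$ together with the surjectivity of $\pi_A$.

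I do not expect any real obstacle at this stage: the substantive ingredient — centrality of $\ker\pi_A$ — has already been secured in Proposition \ref{central kernel} (which extends Stavrova's result to the present setting), and the rest is the standard universal property of a quotient by a central subgroup. The genuinely delicate points lie in the subsequent steps, namely checking that $\ov\sig$ is a morphism of algebraic $K$-groups and then lifting it back across $\nu$ to the desired $\sig\colon G(A)\to H$ using the commutativity of $R_u(H)$; those are addressed in the statements following this proposition.
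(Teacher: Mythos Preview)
Your proposal is correct and follows essentially the same argument as the paper: invoke Proposition~\ref{central kernel} (via Remark~\ref{properties of A}) to get that $\ker\pi_A$ is central, use the surjectivity of $\wt\sig$ from Lemma~\ref{H is connected} to conclude $\wt\sig(\ker\pi_A)\subset Z(H)$, and then factor through the quotient. The paper additionally makes explicit at the outset that $G(A)=E(A)$ (from Remarks~\ref{G=E} and~\ref{properties of A}), which is what justifies the surjectivity of $\pi_A$ onto $G(A)$ rather than merely onto $E(A)$; you are implicitly relying on this, so it would be worth stating.
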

\begin{proof}
First, it follows from Remarks \ref{G=E} and \ref{properties of A} that $G(A) = E(A)$. Moreover, since $\pi_A$ is surjective, we have $E(A) \iso \wt G(A) / \ker \pi_A$. Now, according to Proposition \ref{central kernel}, $\ker \pi_A$ is central in $\wt G(A)$, and by Lemma \ref{H is connected}, $\wt \sig \colon \wt G(A) \to H$ is surjective. Consequently, we have $\wt \sig (\ker \pi_A) \subset Z(H)$, and hence
$\wt \sig$ induces a map $\ov \sig \colon G(A) \to \ov H$ on the quotients satisfying $\ov \sig \circ \pi_A = \nu \circ \wt \sig$.
\end{proof}

\noi Next, we establish:

\begin{proposition}
The group homomorphism $\ov \sig \colon G(A) \to \ov H$ from Proposition \ref{existence of sigma bar} is a morphism of algebraic groups.
\end{proposition}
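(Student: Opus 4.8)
\noindent The plan is to follow the strategy of \cite[\S\S 5--6]{IR}, analyzing $\ov\sig$ on the open cell of the algebraic $K$-group $\mathbf{G} := R_{A/K}(G_A)$, whose group of $K$-points is $G(A)$. By Remarks \ref{G=E} and \ref{properties of A} we have $G(A) = E(A)$, which is generated by the root subgroups $X_\alpha\big(V_\alpha(A)\big)$, $\alpha \in \Phi_k$. Moreover, combining $\ov\sig \circ \pi_A = \nu \circ \wt\sig$ (Proposition \ref{existence of sigma bar}) with $\wt\sig \circ \wt X_\alpha = \psi_\alpha$ (Proposition \ref{existence of sigma tilde}) yields
\[
	\ov\sig\big(X_\alpha(v)\big) = \nu\big(\psi_\alpha(v)\big) \qquad \text{for all } v \in V_\alpha(A).
\]
Since $\psi_\alpha$ is regular (Proposition \ref{existence of A}), $\nu$ is a morphism of algebraic groups, and $X_\alpha \colon V_\alpha \to G$ is a closed embedding (Theorem \ref{root subgroup maps}), the restriction of $\ov\sig$ to each root subgroup is a morphism of varieties. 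The idea is to bootstrap this, successively, to the unipotent radicals of a pair of opposite Borel subgroups, to a maximal torus, to the open cell, and finally to all of $\mathbf{G}$.

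\noindent First I would fix a minimal $k$-parabolic $B \subset G$; since $G$ is quasi-split, $B$ is a Borel with Levi $T = Z_G(S)$ a maximal torus, and $U = R_u(B)$ is directly spanned by the root subgroups $X_\alpha(V_\alpha)$, $\alpha \in \Phi_k^+$, so that the multiplication map $\prod_{\alpha \in \Phi_k^+} V_\alpha \to U$ is an isomorphism of $k$-schemes. Base changing to $A$ and applying $R_{A/K}$ — which commutes with finite products and preserves closed and open immersions — the analogous statement holds inside $\mathbf{G}$; hence $\ov\sig|_{U(A)}$, being the composite of this isomorphism with the product in $\ov H$ of the regular maps $v \mapsto \nu(\psi_\alpha(v))$, is regular. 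Since the Weyl elements $w_\alpha(1)$ lie in $G_0(k) \subseteq G(A)$, their images $\ov\sig(w_\alpha(1))$ are fixed elements of $\ov H$, and conjugation by $w_\alpha(1)$ permutes the root subgroups by (\ref{SteinbergElements1}); taking $w_0$ to be a product of such elements representing the longest element of the relative Weyl group, I conclude that $\ov\sig$ is regular on $U^-(A) := w_0\,U(A)\,w_0^{-1} = R_u(B^-)(A)$ as well.

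\noindent Next I would treat the torus $T$. For $\alpha \in \Phi_k$ and a unit $v \in V_\alpha(A)^\times$, the element $h_\alpha(v) = w_\alpha(v)\,w_\alpha(1)^{-1}$ of (\ref{SteinbergElements}) is an explicit word in the $X_{\pm\alpha}$ involving only $v$ and $v^{-1}$; as inversion is regular on $V_\alpha(A)^\times$ and $\ov\sig$ is regular on the subgroups $X_{\pm\alpha}\big(V_{\pm\alpha}(A)\big)$, the map $v \mapsto \ov\sig\big(h_\alpha(v)\big)$ is regular. Writing $T(A)$ explicitly as the group of diagonal matrices $\mathrm{diag}\big(a_1, \ov{a_1}^{-1}, \dots, a_n, \ov{a_n}^{-1}\big)$ with $a_i \in A_L^\times$ and $\prod_i a_i \in A^\times$, one checks that every such element is a product of the $h_\delta$ (for $\delta$ a simple relative root) evaluated at monomials in the $a_i$ — concretely, $h_{\delta_n}(a_1\cdots a_n)\,h_{\delta_{n-1}}(a_1\cdots a_{n-1})\cdots h_{\delta_1}(a_1)$ for a suitable ordering $\delta_1,\dots,\delta_n$ of the simple roots with $\delta_n$ the long one. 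Since these monomials are regular functions on $T(A)$, it follows that $\ov\sig|_{T(A)}$ is regular. I expect this to be the step demanding the most care: one must keep track of two layers of Weil restriction — the outer $R_{A/K}$ and the $R_{L/k}$ built into the two-dimensional short root groups $V_\alpha \cong R_{L/k}\G_a$ and into the non-split torus $T$ — and verify that the variety structure on $T(A)$ is compatible with this $h_\delta$-parametrization.

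\noindent Finally, the multiplication morphism $U^- \times T \times U \to \mathbf{G}$ is an open immersion onto a dense open subset $\Omega$ (standard for reductive groups, and preserved by $R_{A/K}$ since $\mathbf{G}$ is irreducible); by the preceding steps, $\ov\sig$ is therefore regular on $\Omega(K)$. Given any $g \in \mathbf{G}(K)$, the dense open sets $\Omega$ and $\Omega^{-1}g$ meet, so there is $\omega_0 \in \Omega(K)$ with $g\omega_0^{-1} \in \Omega(K)$; then $x \mapsto \ov\sig(x\omega_0^{-1})\cdot\ov\sig(\omega_0)$ is regular on the Zariski-open neighborhood $\Omega\omega_0$ of $g$ and agrees there with $\ov\sig$. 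Hence $\ov\sig$ is regular everywhere, and being a group homomorphism it is a morphism of algebraic groups.
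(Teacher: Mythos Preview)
Your argument is sound, but the paper takes a genuinely different and more streamlined route. Rather than dissecting the big cell $U^- T U$ and handling the torus explicitly, the paper writes $G(A)=E(A)$ as a finite product $\prod_i U_{\alpha_i}^{e_i}$ of root subgroups (via \cite[Proposition 2.2]{BorelAG}), packages the regular maps $\psi_{\alpha_i}$ into a single regular map $t\colon \prod_i V_{\alpha_i}(A)\to \ov H$, checks that $t$ factors through the surjection $s\colon \prod_i V_{\alpha_i}(A)\to G(A)$, and then invokes two general lemmas from \cite{IR1}: Lemma 3.10 (a set-theoretic factoring of a regular map through a regular surjection yields a rational map) and Lemma 3.12 (a group homomorphism that is rational is automatically a morphism). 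This bypasses entirely the delicate torus step you flagged---no need to parametrize $T(A)$ by the $h_{\delta_i}$ or to track the two layers of Weil restriction.

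Your approach has the virtue of being self-contained and classical, essentially a Bruhat-cell argument, whereas the paper's proof is shorter but leans on the black-box lemmas of \cite{IR1}. Your torus parametrization is correct in spirit (simply-connectedness of $G$ guarantees that the simple coroots generate $T$), but the explicit product formula you wrote would need careful verification over $A_L$ with the conjugation twist; the paper's method avoids this entirely.
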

\begin{proof}
By \cite[Proposition 2.2]{BorelAG}, 
we can write $G(A) = E(A)$ as a product
\[
	G(A) = \prod_{i=1}^m U_{\alpha_i}^{e_i}
\]
for some sequence of roots $\{\alpha_1, \dots, \alpha_m \} \subset \Phi_k$, where $U_{\alpha_i} = X_{\alpha_i}(V_{\alpha_i}(A))$ is the root subgroup associated with $\alpha \in \Phi_k$ and each
$e_i = \pm 1$. Let $X = \displaystyle \prod_{i=1}^m A_{\alpha_i}$ be a product of copies of $A$ indexed by the $\alpha_i$, and define a regular map $s \colon X \to G(A)$ by
\[
	s(a_1, \ldots, a_m) = X_{\alpha_1}(a_1)^{e_1} \cdots X_{\alpha_m}(a_m)^{e_m}.
\]
Let us also define a regular map $t' \colon X \to H$ by
\[
	t'(a_1, \ldots, a_m) = \psi_{\alpha_1}(a_1)^{e_1} \cdots \psi_{\alpha_m}(a_m)^{e_m},
\]
where the $\psi_{\alpha_i}$ are the morphisms from Proposition \ref{existence of A}. 
Set $t = \nu \circ t'$. One easily checks that
$\ov \sig \circ s = t$. In particular, for any $x_1, x_2 \in X$, the condition $s(x_1) = s(x_2)$ implies $t(x_1) = t(x_2)$. So, by \cite[Lemma 3.10]{IR1}, $\ov \sig$ is a rational map. Hence, there is an open subset of $G(A)$ on which $\ov \sig$ is regular. Then, applying \cite[Lemma 3.12]{IR1}, we conclude that $\ov \sig$ is a morphism of algebraic groups.
\end{proof}

To conclude the argument, we will show that $\ov \sigma$ can be lifted to a morphism $\sigma \colon G(A) \to H.$ For this, we first discuss several preliminary statements, which are analogues in the present setting of results established in \cite[\S\S 5,6]{IR}.

Let $A$ be the algebraic ring associated with the representation $\rho$, and let $J \subset A$ be its Jacobson radical. As we already noted, $A$ is a finite-dimensional $K$-algebra; in particular, $A$ is artinian, and hence $J^d = \{0 \}$ for some $d \geq 1$ (see \cite[Proposition 8.4]{At}). Moreover, by the Wedderburn-Malcev Theorem (see \cite[Theorem 11.6]{Pierce}), there exists a semisimple subalgebra $\ov A \subset A$ such that $A = \ov A \oplus J$ as $K$-vector spaces and $\ov A \iso A/J$ as $K$-algebras. We note that \cite[Proposition 2.20]{IR} implies that $\ov A \simeq K \times \cdots \times K$ ($r$ copies). Since $G = \SU_{2n}(L,h)$ is $K$-isomorphic to $\mathrm{SL}_{2n}$, it follows that $G(\ov A)$ is a connected, simply connected, semisimple algebraic group. For the next statement, we consider the canonical homomorphism $A \to A/J$ and set
\[
	G(A, J) = \ker (G(A) \to G(A/J))
\]
to be the corresponding congruence subgroup. We then have the following.

\begin{lemma}
\label{levi decomposition}
\ \

\begin{enumerate}
	\item The congruence subgroup $G(A,J)$ is nilpotent.
	\item We have a Levi decomposition $G(A) = G(A,J) \semi G( \ov A)$.
\end{enumerate}
\end{lemma}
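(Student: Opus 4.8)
The plan is to establish the two assertions by the standard arguments, adapted to the fact that the rational points of $G$ naturally live in $\SL_{2n}$ over $A_L := A \otimes_k L$ rather than over $A$: for (1), an analysis of the $J$-adic filtration of $A$ by commutator estimates for congruence subgroups; for (2), a splitting of the reduction homomorphism $G(A) \to G(A/J)$ provided by the Wedderburn--Malcev section $\ov A \into A$.

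\emph{Part (1).} For an ideal $I \subseteq A$, set $G(A, I) = \ker\big(G(A) \to G(A/I)\big)$. Since $(A/I) \otimes_k L = A_L /(I \otimes_k L)$, an element $X \in G(A) \subseteq \SL_{2n}(A_L)$ lies in $G(A,I)$ exactly when $X \equiv 1 \pmod{I \otimes_k L}$ in $\M_{2n}(A_L)$. The key step is the usual commutator estimate: writing $X = 1 + M \in G(A, I)$ and $Y = 1 + N \in G(A, I')$ with $M \in \M_{2n}(I \otimes_k L)$, $N \in \M_{2n}(I' \otimes_k L)$, one computes $[X,Y] = 1 + (MN - NM) X^{-1} Y^{-1}$, and $MN - NM$ has entries in $(II') \otimes_k L$, so $[G(A,I), G(A,I')] \subseteq G(A, II')$. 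Applying this along the chain $J \supseteq J^2 \supseteq \cdots$, the lower central series of $G(A,J)$ (with $C^1 = G(A,J)$, $C^{i+1} = [G(A,J), C^i]$) satisfies $C^i \subseteq G(A, J^i)$; since $A$ is artinian, $J^d = 0$ for some $d \geq 1$, and then $C^d = \{1\}$, so $G(A,J)$ is nilpotent. As each $G(A, J^i)$ is closed (a kernel of a morphism of algebraic $K$-groups), $G(A,J)$ is nilpotent as an algebraic group.

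\emph{Part (2).} By the Wedderburn--Malcev theorem (already used to produce $\ov A$), the composite $p \colon A \twoheadrightarrow A/J \xrightarrow{\ \sim\ } \ov A$ is a surjective homomorphism of $K$-algebras with $p \circ \iota = \id_{\ov A}$, where $\iota \colon \ov A \into A$ is the inclusion. Applying the restriction-of-scalars functor used to define $G(-)$ as a $K$-group, we obtain morphisms of algebraic $K$-groups $G(\iota) \colon G(\ov A) \to G(A)$ and $G(p) \colon G(A) \to G(\ov A)$ with $G(p) \circ G(\iota) = \id$; hence $G(\iota)$ is a (closed) immersion, $G(p)$ is surjective, and $\ker G(p) = \ker\big(G(A) \to G(A/J)\big) = G(A, J)$. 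The standard argument now yields $G(A) = G(A,J) \semi G(\ov A)$: $G(A, J)$ is normal; $G(A,J) \cap G(\ov A) = \{1\}$ and every $x \in G(A)$ factors as $\big(x \cdot (G(\iota)G(p)x)^{-1}\big) \cdot G(\iota)G(p)x$ with the first factor in $G(A,J)$, both consequences of $G(p) \circ G(\iota) = \id$. Finally, this is a Levi decomposition: an element of $G(A,J)$ has the form $1 + M$ with $M \in \M_{2n}(J \otimes_k L)$ nilpotent, hence unipotent, so $G(A,J)$ is a connected normal unipotent subgroup, while $G(\ov A) \cong \SL_{2n}(K)^r$ is semisimple (as noted just before the statement); comparing with the projection $G(A) \to G(\ov A)$ identifies $G(A,J)$ with $R_u(G(A))$.

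I do not anticipate a real obstacle: the lemma records that the general structure theory for congruence subgroups and Levi decompositions goes through unchanged for the quasi-split unitary group $G$. The closest thing to a care point is the consistent use of the extended ring $A_L = A \otimes_k L$ in the definition of the congruence subgroups --- together with the fact that $\mathrm{char}\,k = 0$, so that $L/k$ is separable, $(J \otimes_k L)^d = J^d \otimes_k L = 0$, and $L$ splits over each of the fields $K$ appearing in $\ov A \cong K \times \cdots \times K$, which is what makes $G(\ov A)$ semisimple and hence the decomposition a genuine Levi decomposition.
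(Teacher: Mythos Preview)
Your argument for (1) is correct and identical to the paper's.

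For (2), your approach via the Wedderburn--Malcev section $\iota \colon \ov A \hookrightarrow A$ is the natural one and is in the spirit of the paper's reference to \cite[Proposition~6.5]{IR}, but one step does not go through as written. The functor $R \mapsto G(R)$ is only defined for $k$-algebras, since $G = \SU_{2n}(L,h)$ is a $k$-group (not a $\Z$-group); so to obtain a morphism $G(\iota) \colon G(\ov A) \to G(A)$ you need $\iota$ to be a \emph{$k$-algebra} map, i.e., $f(k) \subset \ov A$. Wedderburn--Malcev applied over $K$ does not guarantee this. Concretely, $G(A) \subset \SL_{2n}(A_L)$ with $A_L = A[s]/(s^2 - f(d))$, and the inclusion $\ov A \hookrightarrow A$ extends to a ring map $\ov A_L \to A_L$ sending $s \mapsto s$ only if $f(d) \in \ov A$; this fails, e.g., when $A = K[\epsilon]/(\epsilon^2)$ and $f(d) = c + \epsilon$. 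In the Chevalley-group setting of \cite{IR} the group is defined over $\Z$ and any ring homomorphism induces a group homomorphism, so the issue is invisible there --- which is exactly why the paper can simply cite that argument while you, spelling things out, run into it.

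The gap is easily closed in characteristic~$0$: in each Artinian local factor of $A$, Hensel's lemma lets you extend the subfield $f(k)$ to a full coefficient field, so one may choose $\ov A$ with $f(k) \subset \ov A$, after which $\iota$ is a $k$-algebra map and your argument goes through verbatim. Alternatively, bypass the section entirely: having shown $G(A,J)$ is a closed connected normal unipotent subgroup with reductive quotient, conclude $G(A,J) = R_u(G(A))$, invoke Mostow's theorem for a Levi complement $S$, and identify $S \cong G(A)/G(A,J) \cong G(A/J)$ via the surjective quotient morphism $G(A) \to G(A/J)$, which \emph{is} induced by a $k$-algebra map.
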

\begin{proof}
\noindent (1) Fixing an embedding $G(A) \into \GL_{2n}(A_L)$, it is straightforward to show that
\[
	\big[ G(A, J^a), G(A, J^b) \big] \subset G(A, J^{a+b})
\]
for any $a,b \in \Z_{\ge 1}$. Since $J$ is a nilpotent ideal, our claim follows.

\vskip2mm

\noindent (2) Using the fact that $G(A)$ is perfect (see Lemma \ref{product of other generators}), this statement is proved by the same argument as \cite[Proposition 6.5]{IR}.

\end{proof}

Next, we note the following analogue of \cite[Proposition 5.5]{IR} --- this result is proved exactly as in \cite{IR}, employing Lemma \ref{H is connected} in place of the corresponding statements in {\it loc. cit.} (let us point out that this is the first place where the assumption on the commutativity of the unipotent radical $U = R_u(H)$ is used).



\begin{lemma}
\label{property Z} Suppose $U = R_u(H)$ is commutative and $\characteristic K = 0.$ Then $Z(H) \cap U = \{e \}.$ Moreover, in this case, $Z(H)$ is finite and is contained in any Levi subgroup of $H$.
\end{lemma}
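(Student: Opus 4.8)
The plan is to follow the structure of the argument in \cite[Proposition 5.5]{IR}, making the substitutions dictated by the present setup. The first step is to show $Z(H) \cap U = \{e\}$. Suppose $z \in Z(H) \cap U$ is nontrivial. Since $H$ is connected (Lemma \ref{H is connected}) and $\characteristic K = 0$, we may consider the one-parameter unipotent subgroup through $z$; because $U$ is commutative and normal in $H$, and $z$ is central, this subgroup is normalized by $H$ and centralized by $H$. The idea is to derive a contradiction with the fact that $H = \wt\sig(\wt G(A))$ and that $\wt G(A)$ is perfect (Lemma \ref{product of other generators}): a central unipotent element of a connected group $H$ that is the homomorphic image of a perfect group should force, via the Lie algebra, a nontrivial additive character on a perfect group, which is impossible. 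More precisely, in characteristic $0$ a connected unipotent normal subgroup contained in the center of $H$ would give $H$ a quotient map to $\G_a$ (projecting onto the line through $z$ inside the abelian $U$), and a perfect group admits no nontrivial homomorphism to $\G_a(K) = (K,+)$. Hence $z = e$.

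Next, to see that $Z(H)$ is finite: by Lemma \ref{H is connected}, $H$ is connected and perfect, hence $H$ has no nontrivial characters and the reductive quotient $H/U$ is semisimple. Write $Z(H)^\circ$ for the identity component of $Z(H)$; it is a connected commutative normal subgroup, so $Z(H)^\circ = T_0 \times U_0$ with $T_0$ a central torus and $U_0$ a central connected unipotent subgroup (here we use $\characteristic K = 0$ so that $Z(H)^\circ$ decomposes). A central torus $T_0$ is trivial since $H$ is perfect (a nontrivial central torus would yield a nontrivial character of $H$ after passing to $H/[H,H] = \{e\}$ — more directly, $H$ semisimple modulo $U$ plus $H$ perfect kills central tori). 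And $U_0 \subset Z(H) \cap U = \{e\}$ by the first part. Therefore $Z(H)^\circ = \{e\}$, so $Z(H)$ is finite.

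Finally, since $Z(H)$ is finite and $H$ is connected, $Z(H)$ consists of semisimple elements (a finite normal subgroup of a connected group lies in the center, and in characteristic $0$ torsion elements are semisimple); in particular $Z(H) \cap U = \{e\}$ is recovered, and $Z(H)$ injects into the reductive quotient $H/U$. Any Levi subgroup $M$ of $H$ maps isomorphically onto $H/U$ under $H \to H/U$, so the image of $Z(H)$ in $H/U$ lifts uniquely into $M$; since $Z(H) \cap U = \{e\}$, this lift is $Z(H)$ itself, giving $Z(H) \subset M$. The main obstacle I anticipate is the first step — ruling out nontrivial central unipotent elements — which is exactly the point where commutativity of $U$ and perfectness of $H$ must be combined carefully (following \cite[Proposition 5.5]{IR}); the finiteness of $Z(H)$ and its containment in Levi subgroups are then comparatively formal consequences of $H$ being connected, perfect, and reductive modulo a commutative unipotent radical.
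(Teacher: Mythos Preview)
Your overall approach matches the paper's: both defer to \cite[Proposition~5.5]{IR}, with Lemma~\ref{H is connected} supplying the connectedness and perfectness of $H$ that drive the argument. Your second and third paragraphs (finiteness of $Z(H)$ and containment in any Levi subgroup) are essentially correct once the first claim is established.

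The first paragraph, however, has a genuine gap. You assert that a nontrivial $z \in Z(H)\cap U$ yields a quotient map $H \to \G_a$ by ``projecting onto the line through $z$ inside the abelian $U$,'' but that projection is only a map $U \to \G_a$; for it to extend to a homomorphism on all of $H$, its kernel (a hyperplane in $U$) would have to be stable under the conjugation action of $H$, and a generic linear complement to the line through $z$ has no reason to be. The missing ingredient is complete reducibility: since $U$ is commutative and $\characteristic K = 0$, the conjugation action of $H$ on the vector group $U$ is linear and factors through the reductive quotient $H/U$, which is semisimple because $H$ is perfect. Hence $U$ decomposes as $(Z(H)\cap U)\oplus U'$ with $U'$ an $H$-stable complement; using a Levi decomposition $H = U\rtimes M$, the map $H\to U/U'\cong Z(H)\cap U$ sending $u\cdot m$ to the class of $u$ \emph{is} then a homomorphism of algebraic groups. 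If $Z(H)\cap U\neq\{e\}$ this gives a nontrivial homomorphism from the perfect group $H$ to a vector group, which is impossible. Without invoking complete reducibility (or the equivalent splitting of the resulting central extension of $H/U$ by a vector group), your projection step does not go through.
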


\noi We are now ready to complete the proof of Theorem \ref{T-MainTheorem} with the next result.

\begin{theorem}
Assume that $U = R_u(H)$ is commutative and $\characteristic K = 0$. Then there exists a morphism of algebraic groups $\sig \colon G(A) \to H$ making the diagram (\ref{diagram}) commute.
\end{theorem}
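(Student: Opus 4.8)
The plan is to lift the algebraic homomorphism $\ov\sig \colon G(A) \to \ov H$ across the central isogeny-like quotient $\nu \colon H \to \ov H = H/Z(H)$, using the Levi decomposition of both source and target together with the commutativity hypothesis on $U = R_u(H)$. First I would fix a Levi decomposition $H = U \rtimes M$, where $M$ is a connected reductive Levi subgroup; by Lemma~\ref{property Z}, $Z(H)$ is finite and contained in $M$, and $Z(H) \cap U = \{e\}$, so $\nu$ restricts to a central isogeny $M \to \nu(M) =: \ov M$ with kernel $Z(H)$, while $\nu$ maps $U$ isomorphically onto $R_u(\ov H)$. Correspondingly, using Lemma~\ref{levi decomposition}, write $G(A) = G(A,J) \rtimes G(\ov A)$ with $G(\ov A)$ semisimple and simply connected (being a product of copies of $\SL_{2n}$), and $G(A,J)$ nilpotent, hence unipotent as a connected algebraic $K$-group in characteristic $0$.

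The construction of $\sig$ then splits into two pieces. On the semisimple part $G(\ov A)$: the restriction $\ov\sig|_{G(\ov A)} \colon G(\ov A) \to \ov M$ is a morphism of algebraic groups whose image lies in the reductive subgroup $\ov M$ (since $G(\ov A)$ is perfect and $\ov M$ is the maximal reductive quotient receiving it up to $R_u(\ov H)$ — more precisely, after conjugating we may assume the image lands in a Levi, as images of semisimple groups do). Because $G(\ov A)$ is simply connected and $M \to \ov M$ is a central isogeny, $\ov\sig|_{G(\ov A)}$ lifts uniquely to a morphism $\sig_0 \colon G(\ov A) \to M \subset H$; this is the standard simply-connected lifting property for central isogenies of algebraic groups in characteristic $0$. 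On the unipotent part $G(A,J)$: here $\nu$ restricts to an isomorphism of unipotent groups between $R_u(\ov H)$ and $U$, and since $\characteristic K = 0$ one checks (exactly as in \cite[\S 5]{IR}) that $\ov\sig$ carries $G(A,J)$ into $R_u(\ov H)$; composing with the inverse isomorphism $R_u(\ov H) \xrightarrow{\sim} U$ gives a morphism $\sig_1 \colon G(A,J) \to U \subset H$.

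It then remains to glue $\sig_0$ and $\sig_1$ into a single homomorphism $\sig \colon G(A) = G(A,J) \rtimes G(\ov A) \to H$ and to check compatibility. For this one verifies that $\sig_1$ is equivariant for the conjugation actions of $G(\ov A)$ (via $\sig_0$) on $G(A,J)$ and of $M$ on $U$: both $\sig \circ \pi_A$ and $\nu\inv$-lifts of $\nu\circ\wt\sig$ (where defined) must agree, and since $\nu\circ\sig_0$ and $\nu\circ\sig_1$ assemble to $\ov\sig$, the products $\sig_0(g)\sig_1(n)\sig_0(g)\inv$ and $\sig_1(gng\inv)$ have the same image under $\nu$ and both lie in $U$, on which $\nu$ is injective; hence they are equal and $\sig(n,g) := \sig_1(n)\sig_0(g)$ is a well-defined homomorphism. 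Finally, $\nu \circ \sig = \ov\sig$ together with $\ov\sig \circ \pi_A = \nu \circ \wt\sig$ and $Z(H) = \ker\nu$ being central forces $\sig \circ \pi_A = \wt\sig$ up to a central character that one kills by an argument as in \cite[\S 6]{IR} (using that $\wt G(A)$ is perfect, so it admits no nontrivial homomorphism to the abelian group $Z(H)$ twisting two such lifts); this yields $\sig \circ F = \rho$ and the commutativity of diagram~(\ref{diagram}). The main obstacle I expect is not the simply-connected lifting on $G(\ov A)$, which is routine, but rather verifying cleanly that $\ov\sig(G(A,J)) \subseteq R_u(\ov H)$ and that the two partial lifts are mutually compatible on the semidirect product — i.e. checking the equivariance that makes the glued map a homomorphism — since this is where the commutativity of $U$ and the precise form of Lemma~\ref{property Z} enter, and where the parallel argument in \cite[\S\S 5,6]{IR} must be transcribed with care to the present quasi-split unitary setting.
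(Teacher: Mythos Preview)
Your proposal is correct and follows essentially the same approach as the paper's proof: both use the Levi decomposition $G(A) = G(A,J) \rtimes G(\ov A)$, lift $\ov\sig$ separately on the unipotent and semisimple pieces via the isomorphism $\nu|_U$ and the simply-connected lifting property respectively, and then kill the central ambiguity between $\sig \circ \pi_A$ and $\wt\sig$ using the perfectness of $\wt G(A)$. The only cosmetic difference is that the paper defines the Levi subgroup of $H$ as $S = (\nu^{-1}(\ov\sig(G(\ov A))))^\circ$ rather than fixing one in advance, which sidesteps the conjugation step you mention; your equivariance check for gluing is exactly the content the paper suppresses when it asserts that $(\sig_U,\sig_S)$ is a morphism.
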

\begin{proof} (cf. \cite[Proposition 6.6]{IR})
Let $G(A) = G(A,J) \semi G(\ov A)$ be the Levi decomposition from Lemma \ref{levi decomposition}, and set
\[
	\ov U = \ov \sig (G(A,J)), \  \ \ \ov S = \ov \sig (G(\ov A)),  \ \ \ \text{and} \ \ \ S = (\nu \inv(\ov S))^{\circ},
\]
where $\nu \colon H \to \ov H$ is the quotient map. Then $\ov H = \ov U \semi \ov S$ and $H = U \semi S$ are also Levi decompositions. By Lemma \ref{property Z}, we have $Z(H) \subset S$. Consequently, $\ov S = S/Z(H)$ and the restriction $\nu|_U:U \to \ov U$ is an isomorphism.

Now, since the quotient map $\nu \colon H \to \ov H$ is a central isogeny, and, as we observed above, $G(\ov A)$ is simply connected, it follows from
\cite[Proposition 2.24(i)]{BT2} that there exists a morphism of algebraic groups $\sig_S \colon G(\ov A) \to S$ such that $\nu|_S \circ \sig_S = \ov \sig|_{G(\ov A)}$. Define $\sig_U = \nu|_U \inv \circ \lp \ov \sig|_{G(A,J)}\rp$. Then $$\sig = (\sig_U, \sig_S) \colon G(A) \to H$$ is a morphism of algebraic groups such that $\nu \circ \sig = \ov \sig$. It remains to show that $\sig$ makes the diagram (\ref{diagram}) commute. Define
\[
	\chi:\wt G(A) \to H, \qquad g \mapsto \wt \sig(g) \inv \cdot (\sig \circ \pi_A)(g).
\]
Since $\wt \sig$ and $\sig \circ \pi_A$ are group homomorphisms, so is $\chi$. Also, by Proposition \ref{existence of sigma bar}, we have $\ov \sig \circ \pi_A = \nu \circ \wt \sig$, which yields
$\nu \circ \sig \circ \pi_A = \nu \circ \wt \sig$. Thus, the image of $\chi$ is contained in $\ker \nu = Z(H)$. Since $\wt G(A)$ coincides with its commutator subgroup, we conclude that $\chi$ is trivial, and hence
$\sig \circ \pi_A = \wt \sig$. Finally, the equality
$\sig \circ F = \rho$ follows from the commutativity of the rest of the diagram and the surjectivity of $\pi_k.$

\end{proof}


\vskip5mm

\noindent {\small {\bf Acknowledgements.} We would like to thank the anonymous referee for a careful reading of the paper and suggestions that helped to improve the exposition. The first author was partially supported by a Collaboration Grant for Mathematicians from the Simons Foundation.}

\vskip5mm

\bibliographystyle{amsplain}

\end{document}